\theoremstyle{plain}
\newtheorem{theorem}{Theorem}
\newtheorem{lemma}{Lemma}
\theoremstyle{definition}
\newtheorem{definition}{Definition}
\newtheorem{Assumption}{Assumption}
\newcommand{\Rmn}[1]{\uppercase\expandafter{\romannumeral#1}}
\numberwithin{equation}{section}
\newcommand{\Mcal}{\mathcal{M}}
\newcommand{\iprod}[2]{\left \langle #1, #2 \right \rangle }
\newcommand{\tr}{\mathrm{Tr}}
\newcommand{\R}{\mathbb{R}}
\newcommand{\prox}{\mathrm{prox}}
\newcommand{\st}{\mathrm{s.t.}}
\newcommand{\be}{\begin{equation}}
\newcommand{\ee}{\end{equation}}
\newcommand{\bea}{\begin{eqnarray}}
\newcommand{\eea}{\end{eqnarray}}
\newcommand{\argmin}{\mathop{\mathrm{arg\,min}}}
\title{SSNCVX: A primal-dual semismooth Newton method for convex composite optimization problem}
\author{Zhanwang Deng\thanks{Center of Machine learning, Peking University (email: { dzw\_opt2022@stu.pku.edu.cn}).}
\and Tao Wei \thanks{Center of Machine learning, Peking University (email: { weit@pku.edu.cn})}
\and Jirui Ma \thanks{Beijing International Center for Mathematical Research, Peking University (email: { majirui@pku.edu.cn}) }
\and Zaiwen Wen \thanks{Beijing International Center for Mathematical Research, Center for Machine Learning Research,
        Changsha Institute for Computing and Digital Economy,
        Peking University (email: { wenzw@pku.edu.cn}).}
}
\date{
\today
}
\begin{document}
\maketitle
	
\begin{abstract}
 In this paper, we propose a uniform semismooth Newton-based algorithmic framework called SSNCVX for solving a broad class of convex composite optimization problems.
 By exploiting the augmented Lagrangian duality, we reformulate the original problem into a saddle point problem and characterize the optimality conditions via a semismooth system of nonlinear equations. The nonsmooth structure is handled internally without requiring problem specific transformation or introducing auxiliary variables. This design allows easy modifications to the model structure, such as adding linear, quadratic, or shift terms through simple interface-level updates. The proposed method features a single loop structure that simultaneously updates the primal and dual variables via a semismooth Newton step. Extensive numerical experiments on benchmark datasets show that SSNCVX outperforms state-of-the-art solvers in both robustness and efficiency across a wide range of problems.

\textbf{Keywords: Convex composite optimization, augmented Lagrangian duality, semismooth Newton method.}
\end{abstract}

\section{Introduction}

In this paper,  we aim to develop an algorithmic framework for the following convex composite problem:

\begin{equation} \label{general}
\begin{aligned}
\min_{\bm{x} } &\quad p(\bm{x}) + f(\mathcal{B}(\bm{x})) + \iprod{\bm{c}}{\bm{x}} + \frac{1}{2}\iprod{\bm{x}}{\mathcal{Q}(\bm{x})}    , \\
\text{s.t.}  & \quad  \bm{x} \in \mathcal{P}_1,~~ \mathcal{A}(\bm{x}) \in \mathcal{P}_2,
\end{aligned}
\end{equation}
where $p(\bm{x})$ is a convex and nonsmooth function,  $\mathcal{A}: \mathcal{X} \rightarrow \mathbb{R}^m, \mathcal{B}: \mathcal{X} \rightarrow \mathbb{R}^l $ are linear operators, $f: \mathbb{R}^l \rightarrow \mathbb{R}$ is a convex function, $\bm{c} \in \mathcal{X}$, $\mathcal{Q}$ is a positive semidefinite matrix or operator, $\mathcal{P}_1 = \{\bm{x} \in \mathcal{X} | \texttt{l} \le \bm{x} \le \texttt{u} \}$ and $ \mathcal{P}_2 = \{\bm{x}\in \mathbb{R}^m|\texttt{lb} \le \bm{x} \le \texttt{ub}\}$.   The choices of $p(\bm{x})$ provide flexibility to handle many types of problems. While the model \eqref{general} focuses on a single variable $\bm{x}$, it is indeed capable of solving the following more general problem with $N$ blocks of variables with shifting terms $\bm{b}_{1,i}$ and $\bm{b}_{2,i}, (i =1,\cdots,N)$:
\begin{equation} \label{general-block}
\begin{aligned}
\min_{\bm{x}_i } &\quad \sum_{i=1}^N p_i(\bm{x}_i - \bm{b}_{1,i} ) + \sum_{i=1}^N f_i(\mathcal{B}_i(\bm{x}) - \bm{b}_{2,i})  + \sum_{i=1}^N \iprod{\bm{c}_i}{\bm{x}_i} + \sum_{i=1}^N \frac{1}{2}\iprod{\bm{x}_i}{\mathcal{Q}_i(\bm{x}_i)}  , \\
\text{s.t.}  & \quad  \bm{x}_i \in \mathcal{P}_{1,i},~~ \sum_{i=1}^N \mathcal{A}_i (\bm{x}_i) \in \mathcal{P}_{2,i}, \quad i =1,\cdots,N,
\end{aligned}
\end{equation}
where $p_i,f_i,c_i,\mathcal{Q}_i, \mathcal{P}_{1,i}$, and $\mathcal{P}_{2,i}$ satisfy the same assumptions in \eqref{general}.
Models \eqref{general} and \eqref{general-block} have widespread applications in engineering, image processing, and machine learning, etc.  We refer the readers to \cite{boyd2011distributed,anjos2011handbook,wolkowicz2012handbook,dantzig2002linear,ben2001lectures} for more concrete applications.

\subsection{Related works}

The first-order methods are popular for solving \eqref{general} due to the easy implementation and rapid convergence speed to a moderate accuracy point. For SDP and SDP+ problems, the alternating direction method of multipliers (ADMM), as implemented in SDPAD \cite{wen2010alternating}, has demonstrated considerable numerical efficiency. A convergent symmetric Gauss–Seidel based three-block ADMM method is developed in \cite{sun2015convergent}, which is capable of handling SDP problems with additional polyhedral set constraints. ABIP and ABIP+ \cite{deng2022new}  are new interior point methods for conic programming. ABIP
uses a few steps of ADMM to approximately solve the subproblems that arise when applying a path-following barrier algorithm to the homogeneous self-dual embedding of the problem. SCS \cite{o2021operator,o2016conic} is an ADMM-based solver for convex quadratic cone programs implemented in C that applies ADMM to the homogeneous self-dual embedding of the problem, which yields infeasibility
certificates when appropriate. TFOCS \cite{becker2011templates} and FOM \cite{beck2019fom} are solvers that aim to solve convex composite optimization problems using a class of first-order
algorithms such as the Nesterov-type accelerated method.  

The interior point method (IPM) is a classical approach for solving a subclass of \eqref{general}, particularly for conic programming. There are well-designed open source solvers based on the interior point methods, such as {SeDuMi} \cite{sturm1999using} and {SDPT3} \cite{toh1999sdpt3}. For commercial solvers, {MOSEK} \cite{mosek} is a high-performance optimization package specializing in large-scale convex problems (e.g., LP, QP, SOCP, SDP). Another state-of-the-art solver, {Gurobi} \cite{optimization2021gurobi}, excels in speed and scalability for complex optimization tasks, including LP, SOCP, and QP.
Building on these solvers, {CVX} \cite{grant2008cvx} is a MATLAB-based modeling framework for convex optimization, while its Python counterpart {CVXPY} \cite{diamond2016cvxpy} offers similar functionality.
 When addressing conic constraints in \eqref{general}, the interior point methods rely on smooth barrier functions to ensure that the iterates lie within the cone. If direct methods are used to solve the linear equation, each iteration of IPMs requires factorizing the Schur complement matrix, which becomes increasingly costly in both computational and memory as the constraint dimension of the problem grows. Moreover, when iterative methods are used in this context, they often fail to exploit the sparse or low-rank structure of the solution. Furthermore, for general nonsmooth terms, the interior point methods cannot handle them directly.
For instance, problems involving $\|\bm{x}\|_1$ are typically first reformulated as linear programs and then solved using interior-point methods \cite{becker2011templates, candes2007dantzig}.

The semismooth Newton (SSN) methods are also effective for solving certain subclasses of problems in \eqref{general}, such as Lasso \cite{li2018highly,xiao2018regularized} and semidefinite programming (SDP) \cite{li2018semismooth,yang2015sdpnal}. One class of  SSN methods integrates SSN into the augmented Lagrangian method (ALM) framework to solve subproblems of the primal variable, such as SDPNAL+ \cite{sun2020sdpnal+} for SDP with bound constraints and SSNAL~\cite{li2018highly}  for Lasso problems. In addition, SSN can also be applied directly to solve a single nonlinear system derived from optimality conditions. A regularized semismooth Newton method is proposed \cite{xiao2018regularized} to solve two-block composite optimization problems such as Lasso and basis pursuit problems. Based on the equivalence of DRS iteration and ADMM, an efficient solver named SSNSDP for SDP is designed \cite{li2018semismooth}. The idea is further extended to solving optimal transport problems \cite{liu2022multiscale}. However, their analysis of superlinear convergence relies on the BD regularity, which indicates that the solution is isolated. To alleviate this problem, based on the strict complementary and local error bound condition, the superlinear linear convergence of regularized SSN for composite optimization is proposed in \cite{hu2025analysis}. Algorithms based on DRS or proximal gradient mapping can only handle two-block problems.  To alleviate this problem, based on the saddle point problems induced from the augmented Lagrangian duality~\cite{deng2025augmented}, an efficient method called ALPDSN is designed for multi-block problems. It also demonstrates considerable performance on various
SDP benchmarks \cite{deng2025efficient}.
 A decomposition method called SDPDAL \cite{wang2023decomposition} is employed to handle SDP and QSDP with bound constraints, where the subproblem is solved using a semismooth Newton approach.
Compared with the interior point methods, the semismooth Newton methods make use of the intrinsic sparse or low-rank structure efficiently, resulting in low memory requirements and low computational cost at each iteration.  Therefore, developing a convex optimization framework specifically designed for multi-block practical applications is of theoretical and practical significance.


\subsection{Contribution}
We develop an SSN-based general-purpose optimization framework for solving the broad class of problems described in Model \eqref{general}.
The contributions of this paper are listed as follows.
\begin{itemize}
\item  A practical model encompasses various optimization problems with nonsmooth terms or constraints (see Table~\ref{tabel-problem-summarize} for details). By leveraging the AL duality, we transform the original problem \eqref{general} into a saddle point problem and formulate a semismooth system of nonlinear equations to characterize the optimality conditions. Unlike the interior point methods, our framework handles nonsmooth terms such as coupling conic constraints and simple norm constraints in standard form, without additional relaxation variables. Furthermore, it is more user-friendly, allowing for easy modifications to the optimization model, such as adding linear, quadratic, or shift terms. Instead of designing separate algorithms for each problem, the proposed framework requires only the selection of different functions and constraints, with updates made solely at the interface level.

\item A unified algorithmic framework can handle complex multi-block semismooth systems.
Unlike some SSN-based methods that rely on switching to first-order steps (e.g., fixed point iteration or ADMM) to ensure convergence, our approach retains second-order information at every iteration, ensuring faster and more robust convergence. Furthermore,  we introduce a systematic approach for calculating generalized Jacobians, enabling efficient second-order updates for a broad class of nonsmooth functions. For certain complex non-smooth functions, we provide the detailed derivations of computationally efficient implementations. These effective computational approaches enable the practical utilization of both low-rank and sparse structures within the corresponding non-smooth functions.

\item Comprehensive and promising numerical results. To rigorously evaluate the performance of SSNCVX, we conduct extensive experiments across a wide range of optimization problems, including Lasso, fused Lasso, SOCP, QP, and SPCA problems. SSNCVX demonstrates superior performance compared to state-of-the-art solvers on all these problems. These results not only validate SSNCVX as a highly efficient and reliable solver but also underscore its potential as a versatile tool for large-scale optimization tasks in related fields such as machine learning and signal processing.

\end{itemize}

\subsection{Notation}
For a linear operator $\mathcal{A}$, its adjoint operator is denoted by $\mathcal{A}^*$.  For a proper convex function $g$,  we define its domain as ${\rm dom}(g):=\{ \bm{x}: g(\bm{x}) < \infty\}$. The Fenchel conjugate function of $g$ is $g^*(\bm{z}) := \sup_{\bm{x}}\{\left<\bm{x},\bm{z}\right> - g(\bm{x})  \}$ and the subdifferential  is $ \partial g(\bm{x}): = \{\bm{z}:~ g(\bm{y}) - g(\bm{x}) \geq \left<\bm{z}, \bm{y} - \bm{x}  \right>,~\forall \bm{y}  \}. $
For a convex set $\mathcal{Q}$, we use the notation $\delta_{\mathcal{Q}}$ to denote the indicator function of the set $\mathcal{Q}$, which takes the value $0$ on $\mathcal{Q}$ and $+\infty$ elsewhere. The relative interior of $ \mathcal{Q}$ is denoted by ${\rm ri}(\mathcal{Q})$.
 For any proper closed convex function $g$, and constant $t>0$, the proximal operator of $g$ is defined by $
    \prox_{tg}(\bm{x}) = \arg\min_{\bm{y}}\{g(\bm{y}) + \frac{1}{2t}\|  \bm{y} - \bm{x}\|^2  \}.
$
The Moreau envelope function of $g$ is defined as $e_{t} g(x) = \min_{\bm{y}}\{g(\bm{y}) + \frac{t}{2}\|  \bm{y} - \bm{x}\|^2  \}.$
When $g = \delta_{\mathcal{C}}(\bm{x})$ is the indicator function of a convex set $\mathcal{C}$, it holds that ${\rm prox}_{tg}(\bm{x}) = \Pi_{\mathcal{C}}(\bm{x})$, where $\Pi_{\mathcal{C}}$ denotes the projection onto the set $\mathcal{C}$.

\subsection{Organization}
The rest of this paper is organized as follows. A primal-dual semismooth Newton method based on the AL duality is introduced in Section \ref{2}.  The properties of the proximal operator are introduced in Section \ref{3}. Extensive experiments on various problems are conducted in Section \ref{4} and we conclude this paper in Section \ref{5}.
\afterpage{
\begin{landscape}
\begin{table}[ht]
\centering
\footnotesize
\resizebox{1.3\textwidth}{!}{
\begin{tabular}{|p{2cm}|c|c|c|}
\hline
\textbf{Problem} & \textbf{Objective Function} & \textbf{Constraints} & \textbf{Function block} \\
\hline
   & $\underbrace{\iprod{\bm{c}}{\bm{x}}}_{\text{(I)}} + \underbrace{\frac{1}{2}\iprod{\bm{x}}{\mathcal{Q}(\bm{x})}}_{\text{(II)}} + \underbrace{f(\mathcal{B}(\bm{x}))}_{\text{(III)}} + p(\bm{x})$ & $ \underbrace{\bm{x} \in \mathcal{P}_1}_{\text{(IV)}}$, $\underbrace{\mathcal{A}(\bm{x}) \in \mathcal{P}_2}_{\text{(V)}}.$ & (I) (II) (III) (IV) (V)  \\
\hline
 LP & $ \iprod{\bm{c}}{\bm{x}} $ & $\mathcal{A}(\bm{x}) = \bm{b},  \bm{x} \ge 0$ & (I)(V) \\
\hline
 SOCP & $\iprod{\bm{c}}{\bm{x}}$ & $\mathcal{A}(\bm{x}) = \bm{b},  \bm{x} \in \mathcal{Q}^n$ & (I)(V) \\
\hline
 SDP & $\iprod{\bm{C}}{\bm{X}}$ & $\mathcal{A}(\bm{X}) = \bm{b},  \bm{X} \succeq 0$ & (I)(V) \\
\hline
 SDP with box constraints & $ \iprod{\bm{C}}{\bm{X}} $ & $\mathcal{A}(\bm{X}) = \bm{b},  \bm{x} \in \mathcal{P}_1, \bm{X} \succeq 0$ & (I)(IV)(V) \\
\hline
 QP & $ \iprod{\bm{x}}{\mathcal{Q}(\bm{x})} + \iprod{\bm{x}}{\bm{c}}$ & $\texttt{l} \le \bm{x} \le \texttt{u}, \mathcal{A}(\bm{x}) = \bm{b}$ & (I)(II)(IV)(V) \\
\hline
  QP with $\ell_1$ norm & $ \iprod{\bm{x}}{\mathcal{Q}(\bm{x})} + \lambda \|\bm{x}\|_1 $ & $\texttt{l} \le \bm{x} \le \texttt{u}, \mathcal{A}(\bm{x}) = \bm{b}$ & (I)(II)(III)(V) \\
\hline
 Lasso & $\frac{1}{2}\|\mathcal{B}(\bm{x})-\bm{b} \|^2+ \lambda \|\bm{x}\|_1$ & -  & (III) \\
\hline
 Fused Lasso & $\frac{1}{2}\|\mathcal{B}(\bm{x}) -\bm{b} \|^2 + \lambda_1 \|\bm{x} \|_1 + \lambda_2\|D\bm{x}\|_1$ & - & (III) \\
\hline
 Group Lasso & $\frac{1}{2}\|\mathcal{B}(\bm{x})-\bm{b} \|^2+ \lambda \|\bm{x}\|_2$ &-& (III) \\
\hline
 Top-k Lasso & $ \frac{1}{2}\|\mathcal{B}(\bm{x}) -\bm{b} \|^2 +  \lambda \sum_{i=1}^k \bm{x}_{[i]} $ &-& (III) \\
\hline
Low-rank matrix recovery & $ \|\mathcal{B}(\bm{X}) - \bm{B}\|_{\mathrm{F}}^2 + \lambda \|\bm{X}\|_*$ & - & (III) \\
\hline
  Sparse covariance matrix estimation & $  - \log(\text{det}(\bm{X})) + \tr(\bm{XS}) + \lambda \|\bm{X}\|_1 $ & - & (I)(III) \\
\hline
 Sparse PCA & $  - \iprod{\bm{L}}{\bm{x}} + \lambda \|\bm{x} \|_1   $ & $\tr(\bm{x}) =1, \bm{x} \succeq 0$ & (III) \\
\hline
  Basis pursuit & $  \|\bm{x}\|_1   $ & $ \mathcal{A}(\bm{x}) = \bm{b}  $ & (V) \\
\hline
  Robust PCA & $  \|\bm{x}_1\|_* + \lambda \|\bm{x}_2\|_1   $ & $ \bm{x}_1 + \bm{x}_2  = \bm{D} $ & (III)(V) \\
\hline
\end{tabular}
}\caption{Examples that Model \eqref{general} is able to solve.}
\label{tabel-problem-summarize}
\end{table}
\end{landscape}
}

\section{A primal-dual semismooth Newton method} \label{2}
In this section, we introduce a primal-dual semismooth Newton method to solve the original problem \eqref{general}.
 We first transform \eqref{general} into a saddle point problem using the AL duality in Section \ref{2-1}. Subsequently, a monotone nonlinear system induced by the saddle point problem is presented. Such a nonlinear system is semismooth and equivalent to the Karush–Kuhn–Tucker (KKT) optimality condition of problem \eqref{general}. We then introduce an SSN method to solve the nonlinear system in Section \ref{2-2}.
 The efficient calculation of the Jacboian matrix to solve the linear system is introduced in Section \ref{2-3}
 and some implementation details of the algorithm are presented in Section \ref{2-4}.
\subsection{An equivalent saddle point problem} \label{2-1}
The procedure of handing \eqref{general} is similar to that of \cite{deng2025augmented}. However, as the problem being dealt with is more practical and complex, we provide the full algorithmic derivation below for both completeness and reader comprehension.
The dual problem of \eqref{general} can be represented by
\begin{equation} \label{general-dual}
\begin{aligned}
    &\min_{\bm{y},\bm{z},\bm{s},\bm{r},\bm{v}}  \quad  \delta_{\mathcal{P}_2}^*(-\bm{y}) + f^*(\bm{-z}) +  p^*(-\bm{s}) + \frac{1}{2} \iprod{\mathcal{Q}\bm{v}}{\bm{v}} + \delta_{\mathcal{P}_1}^*(-\bm{r}), \\
    &\quad\st \quad  \mathcal{A}^*(\bm{y}) + \mathcal{B}^*\bm{z} + \bm{s} - \mathcal{Q}\bm{v} + \bm{r} = \bm{c}.
    \end{aligned}
\end{equation}
Introducing the slack variables $\bm{o},\bm{q},\bm{t}$, the equivalent optimization problem is
\begin{equation} \label{general-dual2}
\begin{aligned}
&\min_{\bm{y},\bm{z},\bm{s},\bm{r},\bm{v},\bm{o},\bm{q},\bm{t}} \quad  \delta_{\mathcal{P}_2}^*(-\bm{o}) + f^*(\bm{-q}) - \iprod{\bm{b}_1}{\bm{s}} +  p^*(-\bm{s}) + \frac{1}{2} \iprod{\mathcal{Q}\bm{v}}{\bm{v}} + \delta_{\mathcal{P}_1}^*(-\bm{t}), \\
    &\qquad \st \quad  \mathcal{A}^*(\bm{y}) + \mathcal{B}^*\bm{z} + \bm{s} - \mathcal{Q}\bm{v} + \bm{r} = \bm{c},~~\bm{y}=\bm{o},~~\bm{z}=\bm{q},~~\bm{r}=\bm{t}.
    \end{aligned}
\end{equation}
The augmented Lagrangian function of \eqref{general-dual2} is
\begin{equation*}
\begin{aligned}
& \mathcal{L}_{\sigma}(\bm{y},\bm{s},\bm{z},\bm{r},\bm{v},\bm{o},\bm{q},\bm{t},\bm{x}_1,\bm{x}_2,\bm{x}_3,\bm{x}_4) = \delta^*_{\mathcal{P}_2}(-\bm{o}) + f^*(-\bm{q}) +p^*(-\bm{s}) - \iprod{\bm{b}_1}{\bm{s}} + \frac{1}{2}\iprod{\mathcal{Q}(\bm{v})}{\bm{v}}  \\
& \qquad + \delta_{\mathcal{P}_1}^*(-\bm{t})  + \frac{\sigma}{2}\left(\|\bm{o}-\bm{y}
+  \frac{1}{\sigma}\bm{x}_1 \|_{\mathrm{F}}^2
 + \|\bm{q}-\bm{z}+ \frac{1}{\sigma}\bm{x}_2 \|_{\mathrm{F}}^2 + \|\bm{t}-\bm{r}+ \frac{1}{\sigma}\bm{x}_3 \|^2 \right) \\
& \qquad + \frac{\sigma}{2}(\| \mathcal{A}^*(\bm{y}) + \mathcal{B}^*\bm{z} + \bm{s} - \mathcal{Q}\bm{v} + \bm{r} - \bm{c} + \frac{1}{\sigma}\bm{x}_4 \|_{\mathrm{F}}^2) - \frac{1}{2\sigma}\sum_{i=1}^4\|\bm{x}_i\|^2 .
\end{aligned}
\end{equation*}
Minimizing $\mathcal{L}_{\sigma}$ with respect to the variables $\bm{o},\bm{q},\bm{s},\bm{t}$ yields
\begin{equation}
\begin{aligned}
\bm{o} &= -\text{prox}_{\delta^*_{\mathcal{P}_2}/\sigma}(\bm{x}_1/\sigma - \bm{y}), \quad  \bm{q} = -\text{prox}_{f^*/\sigma}(\bm{x}_2/\sigma -\bm{z}  ), \\
\bm{s} &= -\text{prox}_{p^*/\sigma}(\mathcal{A}^*(\bm{y}) + \mathcal{B}^*\bm{z} - \mathcal{Q}\bm{v} + \bm{r} - \bm{c} + \frac{1}{\sigma}\bm{x}_4 ),
\quad \bm{t}  = -\text{prox}_{\delta^*_{\mathcal{P}_1}/\sigma }(\bm{x}_3/\sigma - \bm{r}).
\end{aligned}
\end{equation}
Let $\bm{w} = (\bm{y},\bm{z},\bm{r},\bm{v},\bm{x}_1,\bm{x}_2,\bm{x}_3,\bm{x}_4)$. Then the modified augmented Lagrangian function is:
\begin{equation} \label{eqn-alm}
\begin{aligned}
\Phi_{\sigma}(\bm{w}) &  =  \underbrace{p^*(\prox_{p^*/\sigma}(\bm{x}_4/\sigma -\mathcal{A}^*(\bm{y}) - \mathcal{B}^*\bm{z}  - \mathcal{Q}\bm{v} -\bm{r} + \bm{c} ) ) + \frac{1}{2\sigma}\|\text{prox}_{\sigma p} (\bm{x}_4 + \sigma(\mathcal{A}^*(\bm{y}) + \mathcal{B}^*\bm{z}  - \mathcal{Q}\bm{v} + \bm{r} - \bm{c})) \|^2}_{ \text{Moreau~envelope~} p^* }
  \\
& \quad   +  \underbrace{\delta_{\mathcal{P}_1}^*(\text{prox}_{\delta^*_{\mathcal{P}_1} }(\bm{x}_3/\sigma - \bm{t} ) ) + \frac{1}{2\sigma}\|\Pi_{\mathcal{P}_1}(\bm{x}_3 -\sigma\bm{r}  ) \|^2}_{ \text{Moreau~envelope~} \delta^*_{\mathcal{P}_1} } +\underbrace{ \delta^*_{\mathcal{P}_2}(\text{prox}_{\delta^*_{\mathcal{P}_2}/\sigma}(\bm{x}_1/\sigma -\bm{y})) + \frac{1}{2\sigma}\|\Pi_{\mathcal{P}_2}(\bm{x}_1 - \sigma \bm{y}) \|^2}_{ \text{Moreau~envelope~} \delta^*_{\mathcal{P}_2}  }     \\
& \quad +  \underbrace{f^*(\text{prox}_{f^*/\sigma}(\bm{x}_2/\sigma -\bm{z} )) + \frac{1}{2\sigma} \|\text{prox}_{\sigma f}(\bm{x}_2 -\sigma \bm{z}  )  \|^2}_{ \text{Moreau~envelope~} f^* } + \frac{1}{2}\iprod{\mathcal{Q}\bm{v}}{\bm{v}} - \frac{1}{2\sigma}\sum_{i=1}^4 \|\bm{x}_i \|^2.
\end{aligned}
\end{equation}
Henceforth, the differentiable saddle point problem is
\begin{equation} \label{prob-minmax}
    \min_{\bm{y},\bm{z},\bm{r},\bm{v}} \max_{\bm{x}_1,\bm{x}_2,\bm{x}_3,\bm{x}_4 } \Phi(\bm{y},\bm{z},\bm{r},\bm{v};\bm{x}_1,\bm{x}_2,\bm{x}_3,\bm{x}_4).
\end{equation}

 In the subsequent analysis, we make the following assumption.
\begin{Assumption}[Slater’s condition] \label{assum}
 The dual problem \eqref{general-dual2} has an optimal solution $\bm{y}_*, \bm{z}_*, \bm{s}_*, \bm{r}_*, \bm{v}_*.$ Furthermore,   Slater’s condition holds for the dual problem \eqref{general-dual}, i.e., there exists $-\bm{y} \in {\rm ri}({\rm dom}(\delta_{\mathcal{P}_2}^*)), - \bm{s} \in {\rm ri}({\rm dom}(p^*)), - \bm{r} \in {\rm dom} (\delta^*_{\mathcal{P}_1}) $  and $ -\bm{z} \in {\rm ri}({\rm dom}(f^*)) $ such that $\mathcal{A}^*(\bm{y}) + \mathcal{B}^*\bm{z} + \bm{s} - \mathcal{Q}\bm{v} + \bm{r} = \bm{c}.$
\end{Assumption}

Based on Slater's condition, the saddle point problem satisfies the strong AL duality.
\begin{lemma}[Strong duality \cite{deng2025augmented}]
    Suppose Assumption \ref{assum} holds. Given any $\sigma > 0$, the strong duality holds for \eqref{prob-minmax}, i.e.,
\begin{equation}\label{lemma:strong}
        \min_{\bm{y},\bm{z},\bm{r},\bm{v}} \max_{\bm{x}_1,\bm{x}_2,\bm{x}_3,\bm{x}_4 } \Phi(\bm{y},\bm{z},\bm{r},\bm{v};\bm{x}_1,\bm{x}_2,\bm{x}_3,\bm{x}_4)=     \max_{\bm{x}_1,\bm{x}_2,\bm{x}_3,\bm{x}_4 }  \min_{\bm{y},\bm{z},\bm{r},\bm{v}} \Phi(\bm{y},\bm{z},\bm{r},\bm{v};\bm{x}_1,\bm{x}_2,\bm{x}_3,\bm{x}_4).
\end{equation}
where both sides of \eqref{lemma:strong} are equivalent to problem \eqref{general}.
\end{lemma}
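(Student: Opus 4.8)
The plan is to recognize $\Phi$ as a smooth convex--concave saddle function and then to exhibit a genuine saddle point, from which the identity \eqref{lemma:strong} and the equivalence with \eqref{general} both follow immediately. First I would record the structural properties of $\Phi$. The function in \eqref{eqn-alm} is obtained from the augmented Lagrangian $\mathcal{L}_\sigma$ of the convex program \eqref{general-dual2} by partially minimizing over the slack block $(\bm{o},\bm{q},\bm{s},\bm{t})$; since $\mathcal{L}_\sigma$ is jointly convex in all primal variables of \eqref{general-dual2} and affine in the multipliers $(\bm{x}_1,\bm{x}_2,\bm{x}_3,\bm{x}_4)$, partial minimization preserves convexity in the retained block $(\bm{y},\bm{z},\bm{r},\bm{v})$ and, as an infimum of affine functions, produces concavity in $(\bm{x}_1,\bm{x}_2,\bm{x}_3,\bm{x}_4)$. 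The Moreau-envelope representation displayed in \eqref{eqn-alm} shows moreover that each block is the Moreau envelope of a proper closed convex function and is therefore continuously differentiable, so $\Phi$ is a $C^1$ convex--concave function; in particular weak duality $\max\min\le\min\max$ holds automatically.

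For the reverse inequality I would construct a saddle point directly rather than appeal to a minimax theorem with a compactness requirement, since all variables range over unbounded spaces. Under Assumption \ref{assum}, Slater's condition for \eqref{general-dual} ensures that strong duality holds for the convex program \eqref{general-dual2} with an attained dual optimum and an existing multiplier, so the associated KKT system is solvable. Feeding those KKT relations through the closed-form partial minimizers for $\bm{o},\bm{q},\bm{s},\bm{t}$ yields a point $(\bar{\bm{y}},\bar{\bm{z}},\bar{\bm{r}},\bar{\bm{v}};\bar{\bm{x}}_1,\bar{\bm{x}}_2,\bar{\bm{x}}_3,\bar{\bm{x}}_4)$ at which every block of $\nabla\Phi$ vanishes. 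The key step is to upgrade this stationary point to a global saddle point using convex--concavity: because $\Phi(\cdot;\bar{\bm{x}})$ is convex and $\Phi(\bar{\bm{y}},\bar{\bm{z}},\bar{\bm{r}},\bar{\bm{v}};\cdot)$ is concave, the vanishing of the corresponding gradients forces the primal block to globally minimize $\Phi(\cdot;\bar{\bm{x}})$ and the dual block to globally maximize $\Phi(\bar{\bm{y}},\bar{\bm{z}},\bar{\bm{r}},\bar{\bm{v}};\cdot)$, giving the saddle-point inequalities for all arguments and hence \eqref{lemma:strong}, with common value $\Phi$ evaluated at the saddle point.

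To identify this common value with problem \eqref{general}, I would use that $\min_{\bm{y},\bm{z},\bm{r},\bm{v}}\Phi(\cdot;\bm{x})=\min_{\bm{o},\bm{q},\bm{s},\bm{t}}\min_{\bm{y},\bm{z},\bm{r},\bm{v}}\mathcal{L}_\sigma$ is exactly the augmented Lagrangian dual function of \eqref{general-dual2}; therefore the right-hand side of \eqref{lemma:strong} equals the optimal value of \eqref{general-dual2}, which by Fenchel--Rockafellar duality recovers the optimal value of \eqref{general}, establishing the claimed equivalence. The main obstacle is the production of the saddle point, that is, guaranteeing both attainment of the dual optimum and existence of a multiplier; this is precisely where Slater's condition in Assumption \ref{assum} is indispensable, since it rules out a duality gap and keeps the KKT system consistent. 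A secondary subtlety is verifying that the single partial minimization over $(\bm{o},\bm{q},\bm{s},\bm{t})$ simultaneously preserves convexity in $(\bm{y},\bm{z},\bm{r},\bm{v})$ and concavity in $(\bm{x}_1,\ldots,\bm{x}_4)$, which hinges on $\mathcal{L}_\sigma$ being affine, not merely concave, in the multipliers.
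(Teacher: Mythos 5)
The paper does not actually prove this lemma: it is imported verbatim from \cite{deng2025augmented} with only a citation, so there is no internal argument to compare against. Your proposal is a sound, self-contained reconstruction along the classical route: (a) $\mathcal{L}_\sigma$ is jointly convex in the primal block and \emph{affine} in the multipliers (the $-\frac{1}{2\sigma}\|\bm{x}_i\|^2$ terms exactly cancel the quadratic-in-$\bm{x}_i$ part of the penalties), so partial minimization over $(\bm{o},\bm{q},\bm{s},\bm{t})$ yields a convex--concave $\Phi$ that is $C^1$ because every nonsmooth piece survives only through its Moreau envelope; (b) Assumption \ref{assum} supplies both dual attainment and the relative-interior constraint qualification, hence a full KKT pair for \eqref{general-dual2}, which is a saddle point of the augmented Lagrangian for every $\sigma>0$ and descends to a stationary, hence global, saddle point of $\Phi$; (c) the saddle value equals the optimal value of \eqref{general-dual2} since the penalty terms vanish at feasibility. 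Two small points you should make explicit if you write this out: the passage from the KKT inclusions $0\in\partial(\cdot)$ to the vanishing of $\nabla\Phi$ goes through the proximal fixed-point identities behind the envelope representation \eqref{eqn-alm}, not through naive differentiation of the nonsmooth conjugates; and the claimed ``equivalence to \eqref{general}'' is up to the sign convention under which \eqref{general-dual} is written as a minimization, with the multiplier $\bm{x}_4$ recovering the primal variable $\bm{x}$. Neither is a gap, only a place where the details need to be written down.
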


\subsection{A semismooth Newton method with global convergence} \label{2-2}
It follows from the Moreau envelope theorem \cite{beck2017first} that $e_{\sigma}f^*$, $e_{\sigma}p^*$, $e_{\sigma}\delta_{\mathcal{Q}}^*$, and $e_{\sigma}\delta_{\mathcal{K}}^*$ are continuously differentiable, which implies that $\Phi$ is also continuously differentiable. Hence, the gradient of the saddle point problem can be represented by
\begin{equation} \label{eqn:gradient}
\begin{aligned}
\nabla_{\bm{y}} \Phi_{\sigma}(\bm{w}) & = \mathcal{A} \text{prox}_{\sigma p}(\bm{x}_4 + \sigma(\mathcal{A}^*(\bm{y}) + \mathcal{B}^*\bm{z}- \mathcal{Q}\bm{v} + \bm{r} - \bm{c} ) ) - \Pi_{ \mathcal{P}_2}(\bm{x}_1 - \sigma \bm{y} ), \\
\nabla_{\bm{z}} \Phi_{\sigma}(\bm{w}) & = \mathcal{B} \text{prox}_{\sigma p}(\bm{x}_4 + \sigma(\mathcal{A}^*(\bm{y}) + \mathcal{B}^*\bm{z}- \mathcal{Q}\bm{v} + \bm{r} - \bm{c} ) ) - \text{prox}_{\sigma f}(\bm{x}_2 - \sigma \bm{z}), \\
\nabla_{\bm{r}} \Phi_{\sigma}(\bm{w}) & = \text{prox}_{\sigma p}(\bm{x}_4 + \sigma(\mathcal{A}^*(\bm{y}) + \mathcal{B}^*\bm{z}- \mathcal{Q}\bm{v} + \bm{r} - \bm{c} ) ) - \Pi_{ \mathcal{P}_1}(\bm{x}_3 - \sigma \bm{r} ), \\
\nabla_{\bm{v}} \Phi_{\sigma}(\bm{w}) & = - \mathcal{Q} \text{prox}_{\sigma p}(\bm{x}_4 + \sigma(\mathcal{A}^*(\bm{y}) + \mathcal{B}^*\bm{z}- \mathcal{Q}\bm{v} + \bm{r} - \bm{c} ) ) +
\mathcal{Q}\bm{v}, \\
\nabla_{\bm{x}_1} \Phi_{\sigma}(\bm{w}) & = \frac{1}{\sigma }\Pi_{\mathcal{P}_2}(\bm{x}_1 - \sigma \bm{y} ) - \frac{1}{\sigma} \bm{x}_1, \\
\nabla_{\bm{x}_2} \Phi_{\sigma}(\bm{w}) & = \frac{1}{\sigma }\text{prox}_{\sigma f}(\bm{x}_2 - \sigma \bm{z}) - \frac{1}{\sigma} \bm{x}_2, \\
\nabla_{\bm{x}_3} \Phi_{\sigma}(\bm{w}) & = \frac{1}{\sigma }\Pi_{\mathcal{P}_1}(\bm{x}_3 - \sigma \bm{r} ) - \frac{1}{\sigma} \bm{x}_3, \\
\nabla_{\bm{x}_4} \Phi_{\sigma}(\bm{w}) & = \frac{1}{\sigma }\text{prox}_{\sigma p}(\bm{x}_4 + \sigma(\mathcal{A}^*(\bm{y}) + \mathcal{B}^*\bm{z}- \mathcal{Q}\bm{v} + \bm{r} - \bm{c} ) ) - \frac{1}{\sigma} \bm{x}_4. \\
\end{aligned}
\end{equation}
We note that if $f^*$ is differentiable, $\bm{x}_2$ does not exist and the corresponding gradient is $\nabla_{\bm{z}} \Phi_{\sigma}(\bm{w}) = \mathcal{B} \text{prox}_{\sigma p}(\bm{x}_4 + \sigma(\mathcal{A}^*(\bm{y}) + \mathcal{B}^*\bm{z}- \mathcal{Q}\bm{v} + \bm{r} - \bm{c} ) ) - \nabla f^*(-\bm{z})$.

The nonlinear operator $F(\bm{w})$ is defined as
\begin{small}
\begin{equation} \label{eq:def:F}
     F(\bm{w}) =
     \begin{pmatrix}
          \nabla_{\bm{y}} \Phi(\bm{w});\nabla_{\bm{z}} \Phi(\bm{w});\nabla_{\bm{r}} \Phi(\bm{w});\nabla_{\bm{v}} \Phi(\bm{w});
         - \nabla_{\bm{x}_1} \Phi(\bm{w});
         - \nabla_{\bm{x}_2} \Phi(\bm{w});
         - \nabla_{\bm{x}_3} \Phi(\bm{w});
         - \nabla_{\bm{x}_4} \Phi(\bm{w})
     \end{pmatrix}.
\end{equation}
\end{small}
It is shown in \cite[Lemma 3.1]{deng2025augmented} that $\bm{w}_*$ is a solution of the saddle point problem \eqref{prob-minmax} if and only if it satisfies $F(\bm{w}_*) = 0$. Hence, the saddle point problem can be transformed into solving the following nonlinear equations:
\begin{equation} \label{ssn-eqn}
    F(\bm{w}) =0.
\end{equation}

 \begin{definition} \label{def:Jacobian}
    Let $F$ be a locally Lipschitz continuous mapping. Denote by $D_F$ the set of differentiable points of $F$. The B-Jacobian of $F$ at $\bm{x}$ is defined by
\[
\partial_B F(\bm{w}) := \left\{\lim_{k \rightarrow \infty} J(\bm{w}^k)\, |\,  \bm{w}^k \in D_F, \bm{w}^k \rightarrow \bm{w}\right\},
\]
where $J(\bm{w})$ denotes the Jacobian of $F$ at $\bm{w} \in D_F$. The set $\partial F(\bm{w})$ = $co(\partial_B F(\bm{w}))$ is called the Clarke subdifferential, where $co$ denotes the convex hull.

    $F$ is semismooth  at $\bm{w}$ if $F$ is directionally differentiable at $\bm{w}$ and  for any $\bm{d}$, $J \in \partial F(\bm{w}+\bm{d})$, it holds that
$ \| F(\bm{w}+\bm{d}) -  F(\bm{w}) - J\bm{d} \| = o(\|\bm{d}\|), \;\; \bm{d} \rightarrow 0. $
$F$ is said to be strongly semismooth at $\bm{w}$ if $F$ is directionally differentiable at $\bm{w}$ and
$\| F(\bm{w}+\bm{d}) -  F(\bm{w}) - J\bm{d} \| = O(\|\bm{d}\|^2), \;\; \bm{d} \rightarrow 0.$
We say $F$ is semismooth (strongly semismooth) if $F$ is semismooth (strongly semismooth) for any $\bm{w}$ \cite{mifflin1977semismooth}.
\end{definition}

Note that for a convex function $h$, its proximal operator $\prox_{th}$ is Lipschitz continuous. Then, by Definition \ref{def:Jacobian}, we define the following sets:
 \begin{equation}
 \begin{aligned}
 D _{\Pi_1}  &:=  \partial \Pi_{\mathcal{P}_1} (\bm{x}_3 - \sigma \bm{r}), \;     D _{\Pi_2}  = \partial \Pi_{\mathcal{P}_2}(\bm{x}_1 - \sigma \bm{y}), \;  D_{f}  := \partial  \prox_{\sigma f}( \bm{x}_2 - \sigma \bm{z}), \\
 D_{p} &:=   \partial \prox_{\sigma p}(\bm{x}_4 + \sigma(\mathcal{A}^*(\bm{y}) + \mathcal{B}^*\bm{z}- \mathcal{Q}\bm{v} + \bm{r} - \bm{c} )).
\end{aligned}
 \end{equation}
Hence, the corresponding generalized Jacobian can be represented by
 \begin{equation}\label{equ:jaco}
      \hat{\partial} F(\bm{w}) : = \left\{ \left(
    \begin{array}{cc}
    \mathcal{H}_{\bm{11}}     & \mathcal{H}_{\bm{12}}  \\
    -\mathcal{H}_{\bm{12}}^{\top}     & \mathcal{H}_{\bm{22}}   \\
    \end{array}
    \right)\right\},
\end{equation}
where
\begin{equation} \label{gradient:F}
\begin{aligned}
\mathcal{H}_{\bm{11}} &= \sigma \left( \mathcal{A}, \mathcal{B},\mathcal{I},-\mathcal{Q} \right)^{\mathrm{T}}D_p \left( \mathcal{A}, \mathcal{B},\mathcal{I},-\mathcal{Q} \right) + \sigma \text{blkdiag}(D_{\Pi_1},D_{f},D_{\Pi_2},\mathcal{Q}),\\
\mathcal{H}_{\bm{12}} &=  \left[\left(- \text{blkdiag} \left( [D_{\Pi_1} ,D_{f} ,D_{\Pi_2} ] \right) ; \bm{0} \right) ,(\mathcal{A},\mathcal{B},\mathcal{I},-\mathcal{Q})^{\mathrm{T}}D_{p} \right], \\
\mathcal{H}_{\bm{22}} &= \text{blkdiag}\left\{\frac{1}{\sigma}(\mathcal{I}-D_{\Pi_1}),\frac{1}{\sigma}(\mathcal{I}-D_{h } ),\frac{1}{\sigma}(\mathcal{I}-D_{\Pi_2}),\frac{1}{\sigma}(\mathcal{I}-D_{p}) \right\}.
\end{aligned}
\end{equation}
 It follows from \cite{hiriart1984generalized} and the definition of $ \hat{\partial} F$ that
$ \hat{\partial} F(\bm{w})[\bm{d}] =  \partial F(\bm{w})[\bm{d}]$ for any $\bm{d}$. Hence, $\hat{\partial} F(\bm{w})$ is valid to construct a Newton equation to solve $F(\bm{w}) = 0$.

We next present the semismooth Newton method to solve \eqref{ssn-eqn}.  First,
 an element of the Clarke's generalized Jacobian is taken and defined by \eqref{equ:jaco} as $J^k \in \hat{\partial}F(\bm{w}^k)$. Given $ \tau_{k,i}$, we compute the semismooth Newton direction $\bm{d}^{k,i}$ as the solution of the following linear system
\be \label{eq:ssn} (J^k + \tau_{k,i} \mathcal{I}) \bm{d}^{k, i} = -  F(\bm{w}^k) + \bm{\varepsilon}^k, \ee
where $\bm{\varepsilon}^k$ is the residual term to measure the inexactness of the equation.  We require that there exists a constant $C_{\bm{\varepsilon}} > 0$ such that $\|\bm{\varepsilon}^k\| \le C_{\bm{\varepsilon}} k^{-\beta}$, $\beta \in (1/3 ,1]$. The shift term $\tau_{k,i} \mathcal{I}$ is added to guarantee the existence and uniqueness of $\bm{d}^{k, i}$ and the trial step is defined by
\be \label{eq:ssn-step}
  \bar{\bm{w}}^{k,i}  = \bm{w}^k + \bm{d}^{k,i}.
\ee

Next, we present a globalization scheme to ensure convergence only using regularized semismooth Newton steps. The main idea is to find a suitable $\tau_{k,i}$. It uses both line search on the shift parameter $\tau_{k,i}$ and the nonmonotone decrease on the residuals $F(\bm{w}^k)$. Specifically, for an integer $\zeta\geq 1$, $\nu \in (0,1), \kappa > 1, \gamma > 1, i_{\max} >0, i = 0,\cdots,i_{\max}$, we aim to find the smallest $i$ such that $ \tau_{k,i} = \kappa \gamma^i \|F(\bm{w}^k)\|$ and the nonmonotone decrease condition
\begin{align}
    \|F(\bar{\bm{w}}^{k,i})\|  & \leq  \nu \max_{\max(1, k-\zeta+1) \leq j \leq k}\|F(\bm{w}^j)\| + \varsigma_k \label{eq:decrease-1}
\end{align}
holds,
where $\{ \bm{\varsigma}_i \}$ is a nonnegative sequence such that $ \sum_{i=1}^{\infty} \bm{\varsigma}_i^2 < \infty.$ The iterative update \(\bm{w}^{k+1} = \bar{\bm{w}}^{k,i}\) is performed if condition \eqref{eq:decrease-1} holds. Otherwise, if \eqref{eq:decrease-1} does not hold for $i >i_{\max}$,  we choose $\tau_{k,i}$ such that
\begin{align}
    \tau_{k,i} & \geq c k^{\beta} \label{eq:decrease-2},
  \end{align}
  where $c > 0$ is a given constant and then we set $\bm{w}^{k+1} = \bar{\bm{w}}^{k,i}$.

  Condition \eqref{eq:decrease-1} assesses whether the residuals exhibit a nonmonotone sufficient descent property, which allows for temporary increases in residual values $\|F(\bm{w}^k) \|$. The parameters \(\zeta\) and \(\nu\) govern the number of previous points referenced in this evaluation, where larger values of \(\zeta\) and \(\nu\) lead to more lenient acceptance criteria for the semismooth Newton step. If \eqref{eq:decrease-1} is not satisfied, the regularization parameter \(\tau_{k,i}\) is adjusted according to \eqref{eq:decrease-2}, ensuring a monotonic decrease in the residual sequence \(\{F(\bm{w}^k)\}\) through an implicit mechanism which combines a regularized semismooth Newton step. The nonmonotone strategy provides flexibility by imposing a relatively relaxed condition, which results in the acceptance condition \eqref{eq:decrease-1} with the initial \(\tau_{k,0}\) being satisfied in nearly all iterations, as empirically validated by our numerical experiments.
  The complete procedure is summarized in Algorithm \ref{alg:ssn}.

\begin{algorithm}[htbp]
\caption{A semismooth Newton method for solving \eqref{ssn-eqn}.}
\label{alg:ssn}
\begin{algorithmic}[1]
\Require The constants $\gamma > 1$, $\nu \in (0,1)$, $\beta \in (1/2, 1]$, $\kappa >0$, an integer $\zeta \geq 1$, and an initial point $\bm{w}^0 $, set $k = 0$.
\While {\emph{stopping condition not met}}
\State Compute $F(\bm{w}^k)$ and choose one $J(\bm{w}^k) \in \hat{\partial} F(\bm{w}^k)$.
\State Find the smallest $i \ge 0$ such that $\bar{\bm{w}}^{k,i}$ defined in \eqref{eq:ssn-step} satisfies \eqref{eq:decrease-1} or $\tau_{k,i}$ satisfy \eqref{eq:decrease-2}.
\State Set $\bm{w}^{k+1} = \bar{\bm{w}}^{k,i}$.
\State Set $k=k+1$.
\EndWhile
\end{algorithmic}
\end{algorithm}

We have the following global convergence analysis of Algorithm \ref{alg:ssn} \cite[Theorem 1]{deng2025augmented}.

\begin{theorem} \label{thm:global-con}
Suppose that Assumption \ref{assum} holds. Let $\{\bm{w}^k\}$ be the sequence generated by Algorithm \ref{alg:ssn}. The residual $F(\bm{w}^k)$ converges to $0$, i.e., \be \label{eq:con-w} \lim_{k \rightarrow \infty} \; F(\bm{w}^k) = 0. \ee
\end{theorem}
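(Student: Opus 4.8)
The plan is to build on two structural facts already in place: since $\Phi$ is convex in $(\bm y,\bm z,\bm r,\bm v)$ and concave in $(\bm x_1,\dots,\bm x_4)$, the operator $F$ of \eqref{eq:def:F} is monotone; and $F$ is semismooth with generalized Jacobian of the block form \eqref{equ:jaco}. First I would record that every $J\in\hat\partial F(\bm w)$ has positive semidefinite symmetric part: symmetrizing the block matrix cancels the off-diagonal pair $(\mathcal H_{\bm{12}},-\mathcal H_{\bm{12}}^{\top})$ and leaves $\mathrm{blkdiag}(\mathcal H_{\bm{11}},\mathcal H_{\bm{22}})$, both PSD because the generalized Jacobians $D_p,D_f,D_{\Pi_1},D_{\Pi_2}$ of the (firmly nonexpansive) proximal maps are symmetric with spectrum in $[0,1]$ and $\mathcal Q\succeq0$. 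Consequently $J+\tau\mathcal I\succ0$ for every $\tau>0$, so the regularized system \eqref{eq:ssn} is uniquely solvable and Algorithm \ref{alg:ssn} is well defined. By the Strong duality lemma the saddle problem has a solution, so there exists $\bm w_*$ with $F(\bm w_*)=0$.

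Second, I would derive the basic step estimate. Taking the inner product of \eqref{eq:ssn} with $\bm d^{k,i}$ and using $\iprod{J^k\bm d^{k,i}}{\bm d^{k,i}}\ge0$ gives $\tau_{k,i}\|\bm d^{k,i}\|^2\le\iprod{\bm d^{k,i}}{-F(\bm w^k)+\bm\varepsilon^k}$, hence
\[ \|\bm d^{k,i}\|\le\tau_{k,i}^{-1}\big(\|F(\bm w^k)\|+\|\bm\varepsilon^k\|\big). \]
The same computation shows that $\tau(J+\tau\mathcal I)^{-1}$ is nonexpansive for any $J$ with PSD symmetric part, a fact I would use to control the new residual after a step. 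This quantifies the trade-off exploited by the globalization: enlarging $\tau_{k,i}$ (as in the fall-back \eqref{eq:decrease-2}) forces the step, and hence the change in $F$, to be small, while monotonicity gives $\iprod{F(\bm w^k)}{\bm w^k-\bm w_*}\ge0$, which underpins a Fej\'er-type control of $\|\bm w^k-\bm w_*\|$ and thus boundedness of $\{\bm w^k\}$ (so that the semismooth remainder applies with uniform constants).

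Third, I would split the iteration counter into the set $K_1$ of indices where the nonmonotone test \eqref{eq:decrease-1} is accepted and the set $K_2$ where \eqref{eq:decrease-2} with $\tau_{k,i}\ge ck^\beta$ is used. For $k\in K_2$ the step bound gives $\|\bm d^{k,i}\|=O(\|F(\bm w^k)\|/k^\beta)$; combining the Newton identity $F(\bm w^k)+J^k\bm d^{k,i}=\bm\varepsilon^k-\tau_{k,i}\bm d^{k,i}$ with the semismooth remainder at $\bm w^k$ one aims to show a reduction of $\|F\|$ up to the summable errors $\|\bm\varepsilon^k\|\le C_{\bm\varepsilon}k^{-\beta}$ and the square-summable $\varsigma_k$, so that summing over $K_2$ prevents the residual from staying bounded away from zero along $K_2$. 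For the tail governed by $K_1$ I would run the standard nonmonotone argument: with $R_k:=\max_{\max(1,k-\zeta+1)\le j\le k}\|F(\bm w^j)\|$, condition \eqref{eq:decrease-1} gives $\|F(\bm w^{k+1})\|\le\nu R_k+\varsigma_k$, and since $\nu\in(0,1)$ with $\varsigma_k\to0$, a Grippo--Lampariello--Lucidi type peak analysis forces $R_k\to0$, hence $\|F(\bm w^k)\|\to0$.

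Finally, I would glue the two regimes: if $K_2$ is finite the $K_1$ analysis applies to the tail and gives the conclusion; if $K_2$ is infinite, the reduction along $K_2$ forces $\liminf_k\|F(\bm w^k)\|=0$, which I would upgrade to $\lim_k\|F(\bm w^k)\|=0$ by feeding this back into \eqref{eq:decrease-1} on the intervening $K_1$ blocks. The main obstacle is exactly this coupling: the window maximum in \eqref{eq:decrease-1} and the alternation between the two acceptance mechanisms must be handled simultaneously, and the argument has to absorb two distinct inexactness sources ($\bm\varepsilon^k$ in the linear solve and $\varsigma_k$ in the acceptance test) without destroying the contraction supplied by $\nu<1$ or the forced decrease supplied by $\tau_{k,i}\ge ck^\beta$. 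Establishing a clean sufficient-decrease inequality for the $K_2$ steps --- where one controls only $\tau(J+\tau\mathcal I)^{-1}$ rather than a symmetric Hessian --- is where I expect the real work to lie.
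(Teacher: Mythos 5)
You should first note that the paper does not actually prove this theorem: it is imported verbatim from \cite[Theorem~1]{deng2025augmented}, so there is no in-paper argument to compare against. Judged on its own terms, your outline assembles the right ingredients, and they do match the architecture suggested by the paper's surrounding discussion: monotonicity of $F$ from the convex--concave structure of $\Phi$, positive semidefiniteness of the symmetric part of every $J\in\hat\partial F(\bm w)$ (the off-diagonal pair $(\mathcal H_{\bm{12}},-\mathcal H_{\bm{12}}^{\top})$ cancels and the diagonal blocks are PSD because $D_p,D_f,D_{\Pi_1},D_{\Pi_2}$ are symmetric with spectrum in $[0,1]$ and $\mathcal Q\succeq0$), unique solvability of \eqref{eq:ssn}, the step bound $\|\bm d^{k,i}\|\le\tau_{k,i}^{-1}(\|F(\bm w^k)\|+\|\bm\varepsilon^k\|)$, and a Grippo--Lampariello--Lucidi analysis of the windowed test \eqref{eq:decrease-1}. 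The $K_1$ branch of your argument is essentially complete.

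The genuine gap is the $K_2$ branch and the gluing, which you yourself flag as "where the real work lies" --- but that is precisely the content of the theorem when \eqref{eq:decrease-1} fails infinitely often. Two specific problems: (i) the contraction $\|\tau(J+\tau\mathcal I)^{-1}\|\le1$ only yields $\|F(\bm w^{k+1})\|\le\|F(\bm w^k)\|+O(\|\bm\varepsilon^k\|)+o(\|\bm d^{k}\|)$, i.e.\ non-increase up to errors, not a quantified reduction; and since $\|\bm\varepsilon^k\|\le C_{\bm\varepsilon}k^{-\beta}$ with $\beta\le1$ is \emph{not} summable, "summing over $K_2$" does not by itself prevent the residual from stalling above zero --- you need an additional mechanism (e.g.\ a quasi-Fej\'er estimate on $\|\bm w^k-\bm w_*\|^2$ whose telescoping forces $\sum_k\langle F(\bm w^k),\bm w^k-\bm w_*\rangle/\tau_k<\infty$, combined with non-summability of $\tau_k^{-1}$). (ii) The Fej\'er control you invoke is not immediate: the step is $-(J^k+\tau_{k,i}\mathcal I)^{-1}(F(\bm w^k)-\bm\varepsilon^k)$, not $-\tau_{k,i}^{-1}F(\bm w^k)$, so monotonicity alone does not give $\langle\bm d^{k,i},\bm w^k-\bm w_*\rangle\le0$; you must bound $(J^k+\tau_{k,i}\mathcal I)^{-1}-\tau_{k,i}^{-1}\mathcal I$ via a uniform bound on $\|J^k\|$, and the resulting error term involves $\|F(\bm w^k)\|\,\|\bm w^k-\bm w_*\|\,\tau_{k,i}^{-2}$, whose summability (this is where $\beta>1/2$ enters) must be established by induction to avoid circular use of boundedness. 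Until these estimates are written down and the $\liminf$-to-$\lim$ upgrade across interleaved $K_1$/$K_2$ blocks is carried out, what you have is a credible plan rather than a proof.
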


For local convergence, we first introduce the definition of partial smoothness \cite{lewis2022partial}.
\begin{definition}[$C^p$-partial smoothness]\label{def-psmooth}
Consider a proper closed function $\phi:\R^n\rightarrow \bar{\mathbb{R}}$ and a $C^p$ $(p \ge 2)$ embedded submanifold $\Mcal$ of $\R^n$. The function $\phi$ is said to be $C^p$-partly smooth at $x \in \Mcal$ for $v \in \partial \phi(x)$ relative to $\Mcal$ if
\begin{itemize}
    \item[(i)] Smoothness: $\phi$ restricted to $\mathcal{M}$ is $C^{p}$-smooth near $x$.
    \item[(ii)] Prox-regularity: $\phi$ is prox-regular at $x$ for $v$.

    \item[(iii)] Sharpness: $\mathrm{par}\, \partial_p \phi(x) = N_{\Mcal} (x)$, where $\partial_p $ denotes the set of proximal subgradients of $\phi$ at point $x$, $\mathrm{par}\, \Omega$ is the subspace parallel to $\Omega$, and $N_{\Mcal} (x)$ is the normal space of $\Mcal$ at $x$.
    \item[(iv)] Continuity: There exists a neighborhood $V$ of $v$ such that the set-valued mapping $V \cap \partial \phi$ is inner semicontinuous at $x$ relative to $\mathcal{M}.$
\end{itemize}
\end{definition}

One usage of the partial smoothness is connecting the relative interior condition in (iii) with SC to derive certain smoothness in nonsmooth optimization \cite{bareilles2023newton}.  The local error bound condition \cite{yue2019family} is a powerful tool for analyzing local superlinear convergence in the absence of nonsingularity.
\begin{definition}
We say the local error bound condition holds for $F$ if there exist $\gamma_l > 0$ and $\varepsilon_l > 0$ such that for all $\bm{w}$ with ${\rm dist}(\bm{w},\bm{W}_*)\le \varepsilon_l$, it holds that
\begin{equation}
    \label{eq:eb} \| F(\bm{w}) \| \geq \gamma_{l} {\rm dist}(\bm{w}, \bm{W}_*),
\end{equation}
    where $\bm{W}_*$ is the solution set of $F(\bm{w}) = 0$ and ${\rm dist}(\bm{w}, \bm{W}_*):=\argmin_{\bm{u} \in \bm{W}_*} \|\bm{w} - \bm{u}\|$.
\end{definition}

Using the partial smoothness and local error bound condition,  we have the following local superlinear convergence result \cite[Theorem 2]{deng2025augmented}.

\begin{theorem} \label{thm:local}
Suppose Assumption \ref{assum} holds and $p(\bm{x}), f(\mathcal{B}(\bm{x}))$ are partial smooth. For any optimal solution $\bm{w}_*$, if the SC is satisfied at $\bm{w}_*$, $F$ defined by \eqref{eq:def:F} is locally $C^{p-1}$-smooth
in a neighborhood of $\bm{w}_*$. Furthermore, if $\bm{w}^k$ is close enough to $\bm{w}_* \in \bm{W}^*$ where the SC and the local error bound condition \eqref{eq:eb} hold, then \eqref{eq:decrease-1} always holds with $i= 0$ and $\bm{w}^k$ converges to $\bm{w}_*$ Q-superlinearly.
\end{theorem}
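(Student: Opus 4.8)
The plan is to prove the two assertions of the theorem separately: first the local $C^{p-1}$-smoothness of $F$, and then, building on it, the Q-superlinear convergence with acceptance of the step at $i=0$. The smoothness claim is the structural backbone, because it upgrades the generic semismoothness of $F$ from Definition~\ref{def:Jacobian} to genuine differentiability near $\bm{w}_*$; once $F$ is $C^1$ there we may take $J^k = \nabla F(\bm{w}^k)$ as the exact Jacobian in \eqref{equ:jaco}, and the Newton recursion \eqref{eq:def:F} can be analyzed with smooth-function tools rather than set-valued ones.

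For the smoothness stage I would argue block by block through the gradient formulas \eqref{eqn:gradient}. The operator $F$ is assembled from the four nonsmooth building blocks $\prox_{\sigma p}$, $\prox_{\sigma f}$, $\Pi_{\mathcal{P}_1}$, $\Pi_{\mathcal{P}_2}$ composed with the fixed linear maps $\mathcal{A}, \mathcal{B}, \mathcal{Q}$, so it suffices to show each block is $C^{p-1}$ in a neighbourhood of the argument it takes at $\bm{w}_*$. Since $\mathcal{P}_1, \mathcal{P}_2$ are boxes, their indicators are polyhedral, hence $C^\infty$-partly smooth relative to the active face; SC places the relevant argument in the relative interior of a face, where the projection is locally affine and a fortiori $C^{p-1}$. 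For $\prox_{\sigma p}$ and $\prox_{\sigma f}$ I would invoke the standard consequence of partial smoothness (Definition~\ref{def-psmooth}): if $\phi$ is $C^p$-partly smooth at a point relative to a manifold $\mathcal{M}$ and the proximal argument corresponds to a subgradient in $\mathrm{ri}\,\partial\phi$, then $\prox_{\sigma\phi}$ identifies $\mathcal{M}$ and restricts to a $C^{p-1}$ map there. SC at $\bm{w}_*$ is precisely the relative-interior/nondegeneracy condition triggering this identification for both $p$ and $f\circ\mathcal{B}$. Combining the four $C^{p-1}$ blocks with the smooth linear operators gives that $F$ is $C^{p-1}$ near $\bm{w}_*$.

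For the convergence stage, write $\bm{w}_*^k := \Pi_{\bm{W}_*}(\bm{w}^k)$ and $r_k := \mathrm{dist}(\bm{w}^k,\bm{W}_*) = \|\bm{w}^k - \bm{w}_*^k\|$. Monotonicity gives $J^k \succeq 0$, so $(J^k + \tau_{k,0}\mathcal{I})$ is invertible and the $i=0$ step is well defined. The core estimate I would establish is the contraction $r_{k+1} = o(r_k)$. The ingredients are: (a) smoothness and $F(\bm{w}_*^k)=0$ give $\|F(\bm{w}^k) - J^k(\bm{w}^k - \bm{w}_*^k)\| = o(r_k)$; (b) the error bound \eqref{eq:eb} together with Lipschitz continuity gives $\gamma_l r_k \le \|F(\bm{w}^k)\| \le L r_k$, so the choice $\tau_{k,0} = \kappa\|F(\bm{w}^k)\|$ yields $\tau_{k,0} \asymp r_k$; and (c) the error bound forces $\nabla F(\bm{w}_*^k)$ to be bounded below on the space normal to $\bm{W}_*$. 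I would then split the Newton direction into components tangent and normal to $\bm{W}_*$ at $\bm{w}_*^k$: on the normal space the invertibility (c) produces a near-Newton step driving the normal error to $o(r_k)$, while the regularization $\tau_{k,0}\mathcal{I}$ and the controlled inexactness hold the tangential drift down (it moves essentially along the manifold, contributing only to second order to the distance). This gives $r_{k+1}=o(r_k)$; feeding it back through \eqref{eq:eb} and the upper Lipschitz bound gives $\|F(\bm{w}^{k+1})\| = o(\|F(\bm{w}^k)\|)$, whence the acceptance test \eqref{eq:decrease-1} holds already at $i=0$ for $\bm{w}^k$ close enough to $\bm{w}_*$. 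Finally, superlinear decay of $r_k$ implies $\sum_k\|\bm{w}^{k+1}-\bm{w}^k\|<\infty$, so $\{\bm{w}^k\}$ is Cauchy and converges to a single $\bm{w}_\infty\in\bm{W}_*$; comparing the tail sum with $r_k \le \|\bm{w}^k-\bm{w}_\infty\|$ yields $\|\bm{w}^{k+1}-\bm{w}_\infty\| = o(\|\bm{w}^k-\bm{w}_\infty\|)$.

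The main obstacle is step (c) and the attendant direction decomposition, since nonsingularity of $\nabla F$ is not assumed and $\bm{W}_*$ may be a positive-dimensional manifold: the crude monotone bound $\|(J^k+\tau_{k,0}\mathcal{I})^{-1}\|\le\tau_{k,0}^{-1}$ only delivers $r_{k+1}\lesssim r_k$, so extracting the genuine $o(r_k)$ rate requires the transversal invertibility supplied by \eqref{eq:eb} and a careful proof that the tangential step and the inexactness are both $o(r_k)$. A related delicate point is that this tangential/normal bookkeeping needs the residual solved with \emph{relative} accuracy $\|\bm{\varepsilon}^k\| = o(\|F(\bm{w}^k)\|)$ near the solution; reconciling this with the merely polynomial global requirement $\|\bm{\varepsilon}^k\|\le C_{\bm{\varepsilon}}k^{-\beta}$ is the part of the argument that must be handled with the most care.
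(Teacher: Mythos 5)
First, a point of reference: the paper does not actually prove Theorem~\ref{thm:local} --- it imports it verbatim as \cite[Theorem 2]{deng2025augmented}, so there is no in-paper argument to compare against. Your architecture (prox identification under partial smoothness plus SC to get local $C^{p-1}$-smoothness of $F$, then a Levenberg--Marquardt-style analysis under the local error bound) is the standard and almost certainly the intended route, and your first stage is essentially complete: the box projections are locally affine under SC, and the identification/smoothness property of $\prox_{\sigma p}$, $\prox_{\sigma f}$ on the active manifold is exactly the consequence of Definition~\ref{def-psmooth} that \cite{bareilles2023newton} supplies.

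The genuine gap is in the second stage, and you have in fact diagnosed it yourself without closing it. Writing $\bm{w}_*^k = \Pi_{\bm{W}_*}(\bm{w}^k)$ and expanding
$\bm{w}^{k+1}-\bm{w}_*^k = (J^k+\tau_{k,0}\mathcal{I})^{-1}\bigl[(J^k+\tau_{k,0}\mathcal{I})(\bm{w}^k-\bm{w}_*^k) - F(\bm{w}^k)+\bm{\varepsilon}^k\bigr]$,
the monotone bound $\|(J^k+\tau_{k,0}\mathcal{I})^{-1}\|\le \tau_{k,0}^{-1}$ applied to the three terms gives $o(r_k)/\tau_{k,0} + r_k + \|\bm{\varepsilon}^k\|/\tau_{k,0}$; the middle term is exactly $r_k$, not $o(r_k)$, so superlinearity does not follow from anything you have written down. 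The Yamashita--Fukushima device that rescues the symmetric least-squares version (comparing $\bm{d}^k$ with the ideal step via the variational characterization of the LM subproblem) is unavailable here because $\bm{d}^{k,0}$ solves the nonsymmetric system $(J^k+\tau_{k,0}\mathcal{I})\bm{d}=-F(\bm{w}^k)+\bm{\varepsilon}^k$ rather than minimizing $\|F+J\bm{d}\|^2+\tau\|\bm{d}\|^2$. Your proposed fix --- a tangential/normal splitting with transversal lower-boundedness of $\nabla F$ extracted from \eqref{eq:eb} --- is the right idea, but it is precisely the content of the theorem, and the proposal never establishes the two facts it hinges on: that $J^k$ is uniformly bounded below on the normal space to $\bm{W}_*$ near $\bm{w}_*$, and that the tangential component of $\bm{d}^{k,0}$ contributes only $o(r_k)$ to the distance. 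A second, independent gap is the inexactness: Q-superlinear decay of $r_k$ makes $r_k$ eventually smaller than any polynomial in $1/k$, so the global condition $\|\bm{\varepsilon}^k\|\le C_{\bm{\varepsilon}}k^{-\beta}$ cannot guarantee $\|\bm{\varepsilon}^k\|/\tau_{k,0}=o(r_k)$; the local argument needs a relative residual condition (or exact solves), and this must be stated as a hypothesis rather than ``handled with care.'' As it stands the proposal proves local smoothness and boundedness of the iterates' drift, but not the superlinear rate or the $i=0$ acceptance.
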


 Notably, the partial smoothness and Slater's condition are commonly encountered in various applications.
    Even though the local error bound condition may appear restrictive, such a condition is satisfied when the functions $p$ and $f$ are piecewise linear-quadratic, such as $\ell_1,\ell_{\infty}$ norm and box constraint.
\subsection{An efficient implementation to solve the linear system} \label{2-3}
Ignoring the subscript $k$, the linear system \eqref{eq:ssn} can be represented by:
\begin{equation}\label{eq:ssn-xz}
\left(
    \begin{array}{cc}
    \mathcal{H}_{\bm{11}} + \tau \mathcal{I}     & \mathcal{H}_{\bm{12}}  \\
    -\mathcal{H}_{\bm{12}}^T     & \mathcal{H}_{\bm{22}} + \tau \mathcal{I} \\
    \end{array}
    \right) \left(
    \begin{array}{c}
    \bm{d}_{\bm{1}}       \\
    \bm{d}_{\bm{2}}       \\
    \end{array}
    \right) = -\left(
    \begin{array}{c}
    \tilde{F}_{\bm{1}}       \\
    \tilde{F}_{\bm{2}}    \\
    \end{array}
    \right),
\end{equation}
where $\tilde{F} = F - \bm{\varepsilon}, F = (F_1,F_2)$,  $F_1 = ( F_{\bm{y}},F_{\bm{z}}, F_{\bm{r}},F_{\bm{v}} ), F_2 = (F_{\bm{x}_1},F_{\bm{x}_2}, F_{\bm{x}_3}, F_{\bm{x}_4} ),  \bm{d}_1 = (\bm{d}_{\bm{y}},\bm{d}_{\bm{z}},\bm{d}_{\bm{r}},\bm{d}_{\bm{v}} ), \bm{d}_2 =( \bm{d}_{\bm{x}_1},\bm{d}_{\bm{x}_2},\bm{d}_{\bm{x}_3},\bm{d}_{\bm{x}_4} ).$
For a given $\bm{d}_{\bm{1}}$, the direction $\bm{d}_{\boldsymbol{2}}$ can be calculated by
 \begin{equation} \label{cor:z}
 \bm{d}_{\bm{2}} = (\mathcal{H}_{\bm{22}} + \tau \mathcal{I})^{-1}(\mathcal{H}_{\bm{12}}^{\top}\bm{d}_{\bm{1}}-F_{\bm{2}}).
  \end{equation}
Hence, the linear equation \eqref{eq:ssn-xz} reduces to a  linear system with respect  to $\bm{d}_{\bm{1}}$:
\begin{equation} \label{eqn:simp}
    \widetilde{\mathcal{H}}_{\bm{11}} \bm{d}_{\bm{1}} = -\widetilde{F}_{\bm{1}},
\end{equation}
where  $\widetilde{F}_{\bm{1}}:=\mathcal{H}_{\bm{12}} (\mathcal{H}_{\bm{22}}+\tau \mathcal{I})^{-1} \tilde{F}_{\bm{2}} - \tilde{F}_{\bm{1}}$ and $\widetilde{\mathcal{H}}_{\bm{11}}:= (\mathcal{H}_{\bm{11}} +
 \mathcal{H}_{\bm{12}}(\mathcal{H}_{\bm{22}} + \tau \mathcal{I})^{-1}\mathcal{H}_{\bm{12}}^{\top} + \tau \mathcal{I}).$ The definition of  $\mathcal{H}_{\bm{12}}$ in \eqref{equ:jaco} yields
\begin{equation} \label{eqn:equation}
\widetilde{\mathcal{H}}_{\bm{11}} =
  \left( \mathcal{A}, \mathcal{B},\mathcal{I},\mathcal{Q} \right)^{\mathrm{T}} \overline{D}_p \left( \mathcal{A}, \mathcal{B},\mathcal{I},\mathcal{Q} \right) + \sigma \text{blkdiag}(\overline{D}_{\Pi_1},\overline{D}_{\mathrm{F}},\overline{D}_{\Pi_2}, \mathcal{Q}),
\end{equation}
      where $\text{blkdiag}$ denotes the block diagonal operator, $ \overline{D}_{p} = \sigma D_{p} + \widetilde{D}_{p}, \widetilde{D}_{p} = D_{p} (\frac{1}{\sigma}(\mathcal{I} - D_{p}) + \tau \mathcal{I})^{-1} D_{p}, \overline{D}_{\Pi_1},\overline{D}_{\mathrm{F}}$ and $\overline{D}_{\Pi_2}$ are defined analogously. If the problem has more than one primal variable, we can solve the linear system \eqref{eqn:simp} using iterative methods.
      According to \eqref{eqn:equation}, $\left( \mathcal{A}, \mathcal{B},\mathcal{I},\mathcal{Q} \right) \bm{d}_1$ can be computed first and shared among all components. If the corresponding solution is sparse or low-rank, then the special structures of $\overline{D}_{p}$ can further be used to improve the computational efficiency. Furthermore, if $\widetilde{\mathcal{H}}_{11}$ only has one variable, we can solve the equation \eqref{eqn:simp} using direct methods, such as Cholesky factorization method.

We also note that some variables in $\bm{w}$ may not exist if the function or constraint does not exist in \eqref{general}.  The existence condition of variables is listed in the following.
\begin{itemize}
    \item $\bm{y}$ exists if and only if $\mathcal{P}_2$ is nontrivial. $\bm{x}_1$ exist if and only if $\mathcal{P}_2$ is not a singleton set.
    \item $\bm{z}$ exists if and only if $f$ exists. $\bm{x}_2$ exists if and only if $f$ exists and is nonsmooth.
    \item $\bm{r}$ and $\bm{x}_3$ exist if and only if $\mathcal{P}_1$ is nontrivial.
    \item $\bm{v}$ exists if and only if $\mathcal{Q}$ is nontrivial.
\end{itemize}
For example, for the Lasso problem, $p(\bm{x}) = \|\bm{x}\|_1, f(\mathcal{B}(\bm{x})) = \frac{1}{2} \|\mathcal{B}(\bm{x}) - \bm{b}\|^2, \bm{c} = \bm{0}, \mathcal{Q} = \bm{0}, \mathcal{P}_1 = \mathcal{P}_2 = \oslash.$ The valid variables are $\bm{z}$ and $\bm{x}_4$, i.e., one primal variable and one dual variable.
Consequently, for problems where $\mathcal{H}_{11}$ only has one primal variable, such as Lasso, and SOCP, we can solve the linear system using direct methods such as Cholesky factorization at low cost.


\subsection{Practical implementations} \label{2-4}
 To ensure that Algorithm \ref{alg:ssn} has a better performance on various problems, we present some implementation details of Algorithm \ref{alg:ssn} used to solve \eqref{ssn-eqn} in this section.
\subsubsection{Line search for $\bm{d}^{k}$}
In some cases, condition \eqref{eq:decrease-1} may not be satisfied with the full regularized Newton step in \eqref{eq:ssn-step}.
 The sufficient decrease property \eqref{eq:decrease-1} may be easier to satisfy when a line search strategy is used for problems such as Lasso-type problems. Specifically, we choose appropriate $\alpha$ and $ \tilde{\bm{w}}^{k,i}  = \bm{w}^k + \alpha \bm{d}^{k,i}$ such that condition
 \begin{equation} \label{eq:decrease-11}
  \|F(\tilde{\bm{w}}^{k,i})\|   <  \nu \max_{\max(1, k-\zeta+1)} \|F(\bm{w}^j)\| + \varsigma_k
 \end{equation}
 holds, we then set $\bm{w}^{k+1} = \tilde{\bm{w}}^{k,i}.$  If \eqref{eq:decrease-11} is not satisfied after several line searches, then we set $\bm{w}^{k+1} = \bar{\bm{w}}^{k, i}$ with \eqref{eq:decrease-2} being held.  Since it needs one additional proximal operator calculation every time, the line search property is only effective for $p(x)$ whose proximal operator can be calculated efficiently.
\subsubsection{Update regularization parameter $\kappa$} $\kappa$ serves as the constant in the definition of $\tau_{k,i}$ when $i < i_{\max}$, which is of vital importance to control the quality of $\bm{w}^k.$
When
$\kappa$ is small, the Newton equation is accurate, but $\bm{d}^k$ may not be a good direction. For an iterate $\bm{w}^k$, $\bm{d}_1^k$ and $\bm{d}_2^k$ are descent or ascent directions if for the corresponding primal and dual variables if $\iprod{\bm{d}_1^k}{F_1} < 0$ and $\iprod{\bm{d}_2^k}{F_2} < 0$, respectively. Taking into account this situation, we define the ratio
\begin{equation} \label{ratio}
    \rho_k : = \frac{ -\iprod{\bm{d}^k}{F(\bm{w}^{k+1})}}{\|\bm{d}^k\|_2^2}
\end{equation}
to decide whether $\bm{d}_k$ is a bad direction and how to update $\kappa_k.$ If $\rho_k$ is small, it is usually a signal of a bad Newton step and we increase $\kappa_k$. Otherwise, we decrease it. Specifically, the parameter $\kappa_k$ is updated as
\begin{equation}
    \kappa_{k+1} = \begin{cases}
        \max\{\gamma_1\kappa_k,\underline{\tau}\}, & \mbox{if}\, \rho_k \ge \eta_2,  \\
        \gamma_2 \kappa_k, & \mbox{if}\,  \eta_2 > \rho_k \ge \eta_1, \\
        \min\{\gamma_3\kappa_k,\bar{\tau}\},  & \mbox{otherwise},
    \end{cases}
\end{equation}
where $0 <\eta_1 \le \eta_2, 0 < \gamma_1 < \gamma_2 < 1, \gamma_3 > 1$ are chosen parameters and $\underline{\tau}, \bar{\tau} $  are two predefined positive constants.
\subsubsection{Update penalty parameter $\sigma$}
We also adaptively adjust the penalty factor $\sigma$ based on the primal and dual infeasibility. Specifically, if the primal infeasibility exceeds the dual infeasibility over a certain number of steps, we decrease $\sigma$; otherwise, we increase it. Specifically, we next show our strategies for how to update $\sigma$ incorporating the iteration information. We mainly examine the ratios of primal and dual infeasibilities of the last few steps defined by
\begin{equation} \label{ratio-adaptive}
 \omega^k = \frac{\text{geomean}_{k-l \leq j \leq k} \eta_P^j}{\text{geomean}_{k-l \leq j \leq k } \eta_D^j},
\end{equation}
where the primal infeasibility $\eta_P$ and the dual infeasibility $\eta_D$ are defined by
\begin{equation}
\eta_{P} = \frac{\| \mathcal{A}(\bm{x}) - \Pi_{\mathcal{P}_2}(\mathcal{A}(\bm{x}) - \bm{y} ) \|  }{1+\|
\bm{x}\|} ,\quad \text{and} \quad
 \eta_D = \frac{\| \mathcal{A}^*(\bm{y}) + \mathcal{B}^*(\bm{z}) + \bm{s} - \mathcal{Q}(\bm{v})- \bm{c}\|  }{1 + \|\bm{c}\|},
\end{equation}
and $l$ is a hyperparameter. For every $l$ steps, we check $\omega^k$. If $\omega^k$ is larger (or smaller) than a constant $\delta$, we decrease (or increase) the penalty parameter $\sigma$ by a multiplicative factor $\gamma$ (or $1/\gamma$) with $0 < \gamma < 1$. To prevent $\sigma$ from becoming excessively large or small, upper and lower bounds are imposed on $\sigma$. This strategy has been demonstrated to be effective in solving SDP problems \cite{li2018semismooth}.


\section{Properties of proximal operators} \label{3}
In this section, we demonstrate how to handle the shift term and the computational details of other proximal operators.
According to \eqref{cor:z}, we need the explicit calculation process of $(\frac{1}{\sigma}(\mathcal{I} - D_p) + \tau \mathcal{I})^{-1}$ and $\sigma D_p+  D_p(\frac{1}{\sigma}(\mathcal{I} - D_p) + \tau \mathcal{I})^{-1}D_p $. Furthermore, if $\bm{x}$ and $\mathcal{B}(\bm{x})$  are replaced by $\bm{x} - \bm{b}_1$ or $\mathcal{B}(\bm{x}) - \bm{b}_2$, respectively, the variables need to be corrected by a shift term. Some proximal operators, such as the semidefinite cone or the $\ell_{\infty}$ norm, are already known in the literature.

 \subsection{Handling \texttt{shift} term}
For problems that have a shift term such as $p(\bm{x} - \bm{b}_1)$ or $f(\mathcal{B}(\bm{x}) - \bm{b}_2)$, the corresponding dual problem of \eqref{general} is
\begin{equation} \label{dual2}
    \begin{aligned}
\min_{\bm{y},\bm{z},\bm{s},\bm{r},\bm{v}} \quad & \delta_{\mathcal{P}_2}^*(-\bm{o}) + f^*(\bm{-q}) - \iprod{\bm{b}_2}{\bm{q}} - \iprod{\bm{b}_1}{\bm{s}} +  p^*(-\bm{s}) + \frac{1}{2} \iprod{\mathcal{Q}\bm{v}}{\bm{v}} + \delta_{\mathcal{P}_1}^*(-\bm{t}), \\
    \st \quad & \mathcal{A}^*(\bm{y}) + \mathcal{B}^*\bm{z} + \bm{s} - \mathcal{Q}\bm{v} + \bm{r} = \bm{c},~~\bm{y}=\bm{o},~~\bm{z}=\bm{q},~~\bm{r}=\bm{t}.
    \end{aligned}
\end{equation}
If $f^*$ is differentiable, the gradient with respect to $\bm{q}$ is $-\nabla f^*(-\bm{q}) - \bm{b}_2$. If $f$ is nonsmooth, it follows from the property of the proximal operator that $\bm{q} = \text{prox}_{f^*/\sigma}(\bm{x}_2/\sigma - \bm{z} - \bm{b}_2/\sigma)$. Hence, we only need to replace the $\text{prox}_{\sigma f}(\bm{x}_2 - \sigma \bm{z})$ in \eqref{eqn:gradient} with $\text{prox}_{\sigma f} (\bm{x}_2 - \sigma (\bm{z} - \bm{b}_2) ) - \bm{b}_2.$ Similarly, for $p^*(-\bm{s})$, the corresponding term $\text{prox}_{\sigma p}(\bm{x}_4 + \sigma(\mathcal{A}^*(\bm{y}) + \mathcal{B}^*\bm{z}- \mathcal{Q}\bm{v} + \bm{r} - \bm{c} ) )$ is replaced by $\text{prox}_{\sigma p}(\bm{x}_4 + \sigma(\mathcal{A}^*(\bm{y}) + \mathcal{B}^*\bm{z}- \mathcal{Q}\bm{v} + \bm{r} - \bm{c} - \bm{b}_1) ) + \bm{b}_1$. Hence, we do not need to introduce a slack variable when adding a shift term $\bm{b}_1$ or $\bm{b}_2$ to $p(\bm{x})$ or $f(\mathcal{B}(\bm{x}))$.
\subsection{ $\ell_2$ norm regularizer}

For the $\ell_2$ norm, i.e., $p(\bm{x}) = \lambda \|\bm{x}\|_2$, its proximal operator is
$
\prox_{\lambda \|\cdot \|_2}(\bm{x}) =  \begin{cases}
        \bm{x} - \lambda \bm{x} /\|\bm{x}\|, & \mbox{if } \|\bm{x}\| > \lambda, \\
        0, & \mbox{otherwise}.
      \end{cases}.
$
Consequently, one generalized Jacboian $D$ of the $\ell_2$ norm is
\[
D =
\begin{cases}
     I -\lambda (I - \frac{\bm{x}\bm{x}^{\mathrm{T}} }{\| \bm{x}\|^2})/\|\bm{x} \|, & \text{if}~~ \|\bm{x} \| > \lambda,\\
     0,  & \text{otherwise},
\end{cases}
\]
It follows from the SMW formula
$
(A - {uu}^{\mathrm{T}})^{-1} = A^{-1} + \frac{A^{-1 }{uu}^{\mathrm{T}}A^{-1} }{1 - {u}^{\mathrm{T}}A^{-1}{u} }
$
that for $D \in \partial \text{prox}_{\lambda \|\cdot \|_2} (\bm{x})$,
\[
\begin{aligned}
 \left(\tau I + \frac{1}{\sigma}(I - D) \right)^{-1} & =  \left(\tau I  + \lambda(I - \bm{x}\bm{x}^{\mathrm{T}}/\|\bm{x}\|^2 )/(\sigma\|\bm{x}\|)\right)^{-1}  = \frac{1}{\tau + \frac{\lambda }{\sigma \|\bm{x}\|}} \left(I + \frac{\lambda}{\sigma \tau  \|\bm{x}\|} \bm{x}^{\mathrm{T}} \bm{x} / \|\bm{x}\|^2 \right).
\end{aligned}
\]
Hence, the following qualities hold:
\begin{equation} \label{eqn:l2op}
\begin{aligned}
D\left(\tau I + \frac{1}{\sigma}(I - D) \right)^{-1} &= \left( \left(1 - \frac{\lambda }{\|\bm{x}\|} \right)I + \left(\frac{1}{\tau\sigma} + 1\right) \frac{\lambda}{\|\bm{x}\|} \bm{x}\bm{x}^{\mathrm{T}}/\|\bm{x}\|^2 \right) \left( \frac{1}{\tau + \frac{\lambda }{\sigma\|\bm{x}\|} } \right) ,\\
\overline{D} = \sigma D + D\left(\tau I + \frac{1}{\sigma}(I - D) \right)^{-1}D &=  \left( \sigma \left(1 - \frac{ \lambda}{\|\bm{x}\|}\right) + \left( \frac{1}{\tau + \frac{\lambda}{\sigma\|\bm{x}\|} } \right) \left(1 - \frac{ \lambda}{\|\bm{x}\|} \right) \right) I\\
& \qquad + \left( \left( \frac{1}{\tau + \frac{\lambda}{\sigma \|\bm{x}\|} } \right) \left(2 \frac{ \lambda}{\|\bm{x}\|} + \frac{ \lambda }{\|\bm{x}\| \sigma \tau } - \frac{\lambda^2}{\|\bm{x}\|^2}\right) +  \frac{\sigma \lambda}{\|\bm{x}\|}\right)\bm{x}\bm{x}^{\mathrm{T}}/\|\bm{x}\|^2.
\end{aligned}
\end{equation}
Consequently, the operators in \eqref{eqn:l2op} can be represented as an identity matrix multiplied by a constant plus a rank-one correction.
 For $p(\bm{x}) = \delta_{\|\bm{x} \|_2 \le \lambda}(\bm{x})$, the derivation is similar and omitted.
\subsection{Second-order cone}
Let \( Q^n \subseteq \mathbb{R}^n \) denote the n-dimensional second-order cone (SOC), defined as
$$ Q^n = \left\{ \bm{x} \in \mathbb{R}^n \,:\, \| \bar{\bm{x}} \| \leq x_0 \right\}. $$
Here, a vector \( \bm{x} \in \mathbb{R}^n \) is partitioned as \( \bm{x} = [x_0; \bar{\bm{x}}] \), where \( x_0 \in \mathbb{R} \) is its scalar part and \( \bar{\bm{x}} \in \mathbb{R}^{n-1} \) is its vector part. For any \( \bm{x} \) in the interior of the cone, \( \bm{x} \in \text{int}(Q^n) \), its determinant is given by \( \det(\bm{x}) = x_0^2 - \| \bar{\bm{x}} \|^2 \). If the determinant is non-zero, its inverse is $\bm{x}^{-1} = \frac{1}{\det(\bm{x})} [x_0; -\bar{\bm{x}}]$. A generalized Jacobian $D$ associated with the second-order cone is given by:

\begin{equation} \label{genJacboian:soc}
D =
\begin{cases}
I, & \text{if } x_0 \ge \| \bar{x} \|, \\
0, & \text{if } x_0 \le -\| \bar{x} \|, \\
\frac{1}{2} \begin{bmatrix}
1 & \frac{\bar{x}^\top}{\|\bar{x}\|} \\
\frac{\bar{x}}{\|\bar{x}\|} & \left(1 + \frac{x_0}{\|\bar{x}\|}\right)I - \frac{x_0}{\|\bar{x}\|^3}\bar{x}\bar{x}^\top
\end{bmatrix}, & \text{otherwise.}
\end{cases}
\end{equation}
For the third case, the generalized Jacobian of the second-order cone admits a low-rank decomposition:
\[
\begin{aligned}
D = \frac{1}{2} \left(1 + \frac{x_0}{\|\bar{x}\|}\right) I + \frac{1}{2} \left(1 - \frac{x_0}{\|\bar{x}\|}\right)
\begin{bmatrix}
\frac{\sqrt{2}}{2} \\
\frac{\sqrt{2}}{2} \frac{\bar{x}}{\|\bar{x}\|}
\end{bmatrix}
\begin{bmatrix}
\frac{\sqrt{2}}{2} & \frac{\sqrt{2}}{2} \frac{\bar{x}^T}{\|\bar{x}\|}
\end{bmatrix}
+ \left(-\frac{1}{2}\right) \left(1 + \frac{x_0}{\|\bar{x}\|}\right)
\begin{bmatrix}
\frac{\sqrt{2}}{2} \\
-\frac{\sqrt{2}}{2} \frac{\bar{x}}{\|\bar{x}\|}
\end{bmatrix}
\begin{bmatrix}
\frac{\sqrt{2}}{2} & -\frac{\sqrt{2}}{2} \frac{\bar{x}^T}{\|\bar{x}\|}
\end{bmatrix}.
\end{aligned}
\]
Define the logarithmic barrier function for any \( x \in Q^n \) by
\( g : \mathbb{R}^n \mapsto \mathbb{R} \) with
\[
g(x) =
\begin{cases}
-\frac{1}{2} \log(\det(x)), & \text{if } \| x_0  \| > \bar{x}, \\
+\infty, & \text{otherwise.}
\end{cases}
\]
We note that $\lim_{\mu \rightarrow 0} \mu g(x) = \delta_{Q^n}(x)$.
For SOCP, dealing with the smooth barrier functions may yield better numerical results than dealing with the conic constraints directly. In this case, we use a barrier function $p(\bm{x}) = \mu g(\bm{x})$ to replace the cone constraint function $p(\bm{x}) = \delta_{Q}(\bm{x})$, where $\mu >0$.  For the smoothing function $\mu g(x)$, the following lemma holds.
\begin{lemma} \label{lem:socp}

(i) The proximal mapping of \(\mu g(x)\) is given by \( \text{prox}_{\mu g} : \mathbb{R}^n \mapsto \text{int}(\mathcal{Q}^n) \) with
\begin{equation} \label{lem3-1-0}
\emph{prox}_{\mu g}(z) =
\begin{bmatrix}
\frac{1}{2}\left(z_0 + \sqrt{\frac{1}{2}(\|z\|^2 + 4\mu + \Delta)}\right) \\
\frac{\bar{z}}{2}\left(1 + \frac{\sqrt{2}z_0}{\sqrt{\|z\|^2 + 4\mu + \Delta}}\right)
\end{bmatrix}, \quad z \in \mathbb{R}^n,
\end{equation}
where \(\Delta = \sqrt{\det(z)^2 + 8\mu \|z\|^2 + 16\mu^2}\). Furthermore, the inverse function of the proximal mapping is given by \( \text{prox}_{\mu g}^{-1} : \text{int}(\mathcal{Q}^n) \to \mathbb{R} \) with
\begin{equation} \label{lem3-1-1}
\emph{prox}_{\mu g}^{-1}(x) = x - \mu x^{-1}, \quad x \in \text{int}(\mathcal{Q}^n).
\end{equation}

(ii) The projection function is the limit of the proximal mapping as \(\mu\) approaches 0, i.e.,
\[
\lim_{\mu \to 0} \text{prox}_{\mu g}(z) = \Pi_K(z), \quad z \in \mathbb{R}^n.
\]
(iii) For \(z \in \mathbb{R}^n\), let \(x = \text{prox}_{\mu g}(z)\). The inverse matrix of the derivative of the proximal mapping at the point \(z\) is given by

\[
(\partial_z \emph{prox}_{\mu g}(z))^{-1} = I - \mu \partial_x(x^{-1}),
\]

where \(\partial_x(x^{-1}) = \frac{1}{\det(x)} \begin{bmatrix}
1 & \\
& -I_{n-1}
\end{bmatrix} - 2(x^{-1})(x^{-1})^\top\).

(iv) The derivative of the proximal mapping at the point \(z\) is given by
\[
\begin{aligned}
\partial_z \emph{prox}_{\mu g}(z) &=
\begin{bmatrix}
\frac{\det(x)}{\det(x)-\mu} &  \\
 & \frac{\det(x)}{\det(x)+\mu} I_{n-1}
\end{bmatrix}
- \frac{2\mu \det(x) \begin{bmatrix}
\frac{x_0}{\det(x)-\mu}  \\
\frac{-\bar{x}}{\det(x)+\mu}
\end{bmatrix}
\begin{bmatrix}
 \frac{x_0}{\det(x)+\mu} \\
\frac{-\bar{x}}{\det(x)+\mu}
\end{bmatrix}^{\mathrm{T}} }{\det(x) + 2\mu \left( \frac{x_0^2}{\det(x)-\mu} + \frac{\|\tilde{x}\|^2}{\det(x)+\mu} \right)} := \Lambda + a {u} {u}^{\mathrm{T}},
\end{aligned}
\]
where $\Lambda = \begin{bmatrix}
    a_0 &  \\
    & a_1 I_{n-1}
\end{bmatrix} $ and $a_0,a_1 \in \mathbb{R}$ are constants.
\end{lemma}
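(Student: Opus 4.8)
The plan is to derive all four parts from a single optimality condition for the proximal subproblem. On $\text{int}(Q^n)$ the barrier $g(x) = -\tfrac12\log\det(x)$ is smooth with $\nabla g(x) = -x^{-1}$, a one-line computation from $\partial_{x_0}\det(x) = 2x_0$, $\partial_{\bar x}\det(x) = -2\bar x$ together with $x^{-1} = \tfrac{1}{\det(x)}[x_0;-\bar x]$. Since $\mu g + \tfrac12\|\cdot-z\|^2$ is strongly convex and the barrier keeps the minimizer interior, the proximal point $x = \prox_{\mu g}(z)$ is the unique solution of $\mu\nabla g(x) + (x-z) = 0$, i.e. $z = x - \mu x^{-1}$, which is exactly the inverse formula \eqref{lem3-1-1} of part (i). To get the explicit formula I would pass to the Jordan-algebra spectral decomposition $x = \lambda_1 c_1 + \lambda_2 c_2$ with $\lambda_{1,2} = x_0 \pm \|\bar x\|$ and idempotents $c_{1,2} = \tfrac12[1;\pm\bar x/\|\bar x\|]$. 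Because $\bar z$ is a positive multiple of $\bar x$, the vector $z$ shares these idempotents, $z = \eta_1 c_1 + \eta_2 c_2$ with $\eta_{1,2} = z_0 \pm \|\bar z\|$, and $z = x - \mu x^{-1}$ decouples into $\eta_i = \lambda_i - \mu/\lambda_i$, whose positive root is $\lambda_i = \tfrac12(\eta_i + \sqrt{\eta_i^2+4\mu})$. Recombining through $x_0 = \tfrac12(\lambda_1+\lambda_2)$ and $\bar x = \tfrac12(\lambda_1-\lambda_2)\,\bar z/\|\bar z\|$, then simplifying with $\eta_1^2+\eta_2^2 = 2\|z\|^2$ and $\eta_1\eta_2 = \det(z)$, yields the stated closed form with $\Delta = \sqrt{\det(z)^2 + 8\mu\|z\|^2 + 16\mu^2}$.

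Part (ii) follows immediately from the spectral representation: $\lambda_i(\mu) = \tfrac12(\eta_i + \sqrt{\eta_i^2+4\mu}) \to \max(\eta_i,0)$ as $\mu\to0$, and since the metric projection onto $Q^n$ clips the Jordan eigenvalues at zero, $\lim_{\mu\to0}\prox_{\mu g}(z) = \Pi_{Q^n}(z)$ (one could also take $\mu\to0$ directly in the closed form). For part (iii), I would differentiate the inverse relation $z = x - \mu x^{-1}$ to get $I - \mu\,\partial_x(x^{-1})$, which by the inverse function theorem equals $(\partial_z\prox_{\mu g}(z))^{-1}$ evaluated at $x = \prox_{\mu g}(z)$. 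Writing $x^{-1} = Rx/\det(x)$ with $R = \text{diag}(1,-I_{n-1})$ and using $\nabla\det(x) = 2Rx$, the quotient rule gives $\partial_x(x^{-1}) = R/\det(x) - 2(Rx)(Rx)^{\top}/\det(x)^2 = R/\det(x) - 2(x^{-1})(x^{-1})^{\top}$, the claimed expression.

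For part (iv), substituting (iii) shows $(\partial_z\prox_{\mu g}(z))^{-1} = M + 2\mu(x^{-1})(x^{-1})^{\top}$, where $M = \text{diag}\big(\tfrac{\det(x)-\mu}{\det(x)}, \tfrac{\det(x)+\mu}{\det(x)}I_{n-1}\big)$ is diagonal. I would invert this diagonal-plus-rank-one matrix by the Sherman--Morrison formula with $u = x^{-1}$ and $a = 2\mu$: the leading term is $\Lambda = M^{-1}$, whose entries are the constants $a_0 = \tfrac{\det(x)}{\det(x)-\mu}$ and $a_1 = \tfrac{\det(x)}{\det(x)+\mu}$, and the correction is $-\tfrac{2\mu\,M^{-1}uu^{\top}M^{-1}}{1+2\mu\,u^{\top}M^{-1}u}$. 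Computing $M^{-1}u = [\,x_0/(\det(x)-\mu)\,;\,-\bar x/(\det(x)+\mu)\,]$ and clearing $\det(x)$ in the scalar denominator to obtain $\det(x) + 2\mu\big(\tfrac{x_0^2}{\det(x)-\mu} + \tfrac{\|\bar x\|^2}{\det(x)+\mu}\big)$ recovers the stated $\Lambda + a u u^{\top}$ decomposition.

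I expect the main obstacle to be the algebraic bookkeeping in part (i): collapsing the recombined eigenvalues into the compact form with $\Delta$ relies on the identities $(\sqrt{\eta_1^2+4\mu}+\sqrt{\eta_2^2+4\mu})^2 = 2(\|z\|^2+4\mu+\Delta)$ and $\eta_1^2-\eta_2^2 = 4z_0\|\bar z\|$, and one must first confirm $\bar z \parallel \bar x$ (immediate from $\bar z = (1+\mu/\det(x))\bar x$) to justify the shared idempotents. Everything downstream is a mechanical application of the inverse function theorem and the Sherman--Morrison formula.
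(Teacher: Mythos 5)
Your proposal is correct, and parts (iii) and (iv) follow essentially the same route as the paper: differentiate the inverse relation $z = x - \mu x^{-1}$, invoke the inverse function theorem, and invert the resulting diagonal-plus-rank-one matrix by Sherman--Morrison (the paper writes the diagonal part as $\Lambda = I - \rho\,\mathrm{diag}(1,-I_{n-1})$ with $\rho = \mu/\det(x)$, which is your $M$). Where you genuinely diverge is in parts (i) and (ii). The paper works directly on the coordinates of the stationarity equation: it sets $\rho = \mu/\det(x)$, writes $x_0 = z_0/(1-\rho)$, $\bar x = \bar z/(1+\rho)$, substitutes into $\det(x) = x_0^2 - \|\bar x\|^2$ to obtain a scalar equation in $\rho$, and reduces it via $r = \rho + 1/\rho$, splitting into the cases $z_0>0$, $z_0<0$, and the degenerate $\rho = 1$; part (ii) is then a case analysis on the resulting closed form. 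You instead decouple the equation along the shared Jordan frame, solving the scalar quadratic $\lambda_i^2 - \eta_i\lambda_i - \mu = 0$ for each eigenvalue and recombining. Your route buys a cleaner, sign-case-free derivation (the positive root is forced by $\lambda_i>0$) and makes part (ii) immediate from $\lambda_i(\mu)\to\max(\eta_i,0)$, which is exactly the eigenvalue clipping of $\Pi_{Q^n}$; the identities you flag, $(\sqrt{\eta_1^2+4\mu}+\sqrt{\eta_2^2+4\mu})^2 = 2(\|z\|^2+4\mu+\Delta)$ and $\eta_1^2-\eta_2^2 = 4z_0\|\bar z\|$, do check out since $(\eta_1^2+4\mu)(\eta_2^2+4\mu)=\Delta^2$. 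The paper's route is more elementary (no Jordan-algebra machinery) and its case split makes the degenerate situation $z_0=0$, $\rho=1$ explicit. Two small points you should still address: the case $\bar z = 0$ (equivalently $\bar x = 0$), where the Jordan frame is not unique and the decoupling argument needs a one-line separate remark, and the justification that the minimizer is attained in $\mathrm{int}(Q^n)$ so that the stationarity equation characterizes it --- you assert the latter correctly but it deserves a sentence, since it is what makes $\mathrm{prox}_{\mu g}$ map all of $\mathbb{R}^n$ into the open cone.
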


\begin{proof}
(i) Given $ z\in \mathbb{R}^{n} $, it follows from the definition of the proximal mapping that $ x=\operatorname{prox}_{\mu g}(z) $ is the optimal point of the following minimization problem
$$ \min_{x\in \operatorname{int}(\mathcal{Q}^n)}f(x)=\frac{1}{2}\|z-x\|^{2}-\frac{1}{2}\mu\log\det(x). $$
$ x\in \operatorname{int}(\mathcal{Q}^n) $ implies $ \det(x)>0. $ The optimality condition $x-z-\mu x^{-1}=0 $ is equivalent to

\begin{equation} \label{lemx0}
\text{prox}_{\mu g}^{-1}(x) = z=x-\mu x^{-1}\Longleftrightarrow\begin{cases}z_{0}=x_{0}-\frac{\mu}{\det(x)}x_{0},\\
\bar{z}=\bar{x}+\frac{\mu}{\det(x)}\bar{x}.\end{cases}
\end{equation}
Hence \eqref{lem3-1-1} holds.  To derive the expression of $ \operatorname{prox}_{\mu g}(z) $, we consider the following two cases. If $ \frac{\mu}{\det(x)}=1 $, then $ z_{0}=0,\bar{x}=\frac{1}{2}\bar{z} $ and $ x_{0}=\sqrt{\det(x)+\|\bar{x}\|^{2}}=\sqrt{\frac{1}{4}\|\bar{z}\|^{2}+\mu} $. Otherwise $\frac{\mu}{\det(x)}\neq 1$, then $z_{0}\neq 0$. Provided with this condition, $\nabla f(x)=0$ is equivalent to
\begin{equation}
x_{0}=\frac{z_{0}}{1-\rho},\quad\bar{x}=\frac{\bar{z}}{1+\rho},
\end{equation}
where $\rho=\frac{\mu}{\det(x)}>0$. Combined with the identity that $\det(x)=x_{0}^{2}-\|\bar{x}\|^{2}$, we see $\rho$ is the root of the polynomial equation
$$
\frac{z_{0}^{2}}{(1-\rho)^{2}}-\frac{\|\bar{z}\|^{2}}{(1+\rho)^{2}}=\frac{\mu}{\rho}\Longleftrightarrow\frac{z_{0}^{2}}{\rho+\frac{1}{\rho}-2}-\frac{\|\bar{z}\|^{2}}{\rho+\frac{1}{\rho}+2}=\mu.
$$

Let $r=\rho+\frac{1}{\rho}$. Note that $r>2$. By solving the above equation, we have
$$
r=\frac{\det(z)+\Delta}{2\mu},\quad\rho=\begin{cases}
\frac{r-\sqrt{r^{2}-4}}{2} & \text{if } z_{0}>0,\\
\frac{r+\sqrt{r^{2}-4}}{2} & \text{if } z_{0}<0,
\end{cases}
$$
where $\Delta=\sqrt{\det(z)^{2}+8\mu\|z\|^{2}+16\mu^{2}}$. Subsequently, we take $\rho$ into \eqref{lemx0} and have for $z_{0}>0$, $\rho<1$,
\begin{align*}
x_{0}&=\frac{z_{0}}{1-\rho}=\frac{z_{0}}{2}\left(1+\sqrt{\frac{r+2}{r-2}}\right)=\frac{1}{2}\left(z_{0}+\sqrt{\frac{1}{2}(\|z\|^{2}+4\mu+\Delta)}\right),\\
\bar{x}&=\frac{\bar{z}}{1+\rho}=\frac{\bar{z}}{2}\left(1+\sqrt{\frac{r-2}{r+2}}\right)=\frac{\bar{z}}{2}\left(1+\frac{\sqrt{2}z_{0}}{\sqrt{\|z\|^{2}+4\mu+\Delta}}\right).
\end{align*}
For $z_{0}<0$, $\rho>1$,
\begin{align*}
x_{0}&=\frac{z_{0}}{1-\rho}=\frac{z_{0}}{2}\left(1-\sqrt{\frac{r+2}{r-2}}\right)=\frac{1}{2}\left(z_{0}+\sqrt{\frac{1}{2}(\|z\|^{2}+4\mu+\Delta)}\right),\\
\bar{x}&=\frac{\bar{z}}{1+\rho}=\frac{\bar{z}}{2}\left(1-\sqrt{\frac{r-2}{r+2}}\right)=\frac{\bar{z}}{2}\left(1+\frac{\sqrt{2}z_{0}}{\sqrt{\|z\|^{2}+4\mu+\Delta}}\right).
\end{align*}
Therefore \eqref{lem3-1-0} holds for any $z\in\mathbb{R}^{n}$.

(ii) Let $\mu \to 0$, then $\Delta \to |\det(z)|$. Hence, it follows that
$$
\begin{aligned}
\lim_{\mu \to 0} \operatorname{prox}_{\mu g}(z) &=
\begin{bmatrix}
\frac{1}{2}\left(z_0 + \sqrt{\frac{1}{2}(\|z\|^{2}+|\det(z)|)}\right) \\[8pt]
\frac{\bar{z}}{2}\left(1+\dfrac{\sqrt{2}z_0}{\sqrt{\|z\|^{2}+|\det(z)|}}\right)
\end{bmatrix} =
\begin{cases}
z, & \text{if } z_0 \ge \|\bar{z}\|,\\
0, & \text{if } z_0 \le -\|\bar{z}\|,\\
\begin{bmatrix}
\frac{1}{2}(z_0 + \|\bar{z}\|) \\[8pt]
\frac{\bar{z}}{2}\left(1+\dfrac{z_0}{\|\bar{z}\|}\right)
\end{bmatrix}, & \text{if } - \|\bar{z}\| < z_0 < \|\bar{z}\|
\end{cases} = \Pi_{K}(z), \quad z \in \mathbb{R}^{n}.
\end{aligned}
$$

(iii) Note that $\operatorname{prox}_{\mu g}^{-1}$ is a single-valued mapping. By the inverse function theorem, it holds that
\begin{align*}
(\partial_z\operatorname{prox}_{\mu g}(z))^{-1} &= \partial_x\operatorname{prox}_{\mu g}^{-1}(x) = \partial_x \left( x - \mu x^{-1} \right) \overset{\eqref{lemx0}}{=} I - \mu\partial_x(x^{-1}) = I - \mu\left(\frac{1}{\det(x)}\begin{bmatrix}1 & \\ & -I_{n-1}\end{bmatrix} - 2(x^{-1})(x^{-1})^\top\right),
\end{align*}
where the last equation is obtained from the following derivation:
\begin{align*}
\partial_x \left( x^{-1} \right)
  &= \partial_x \left(\frac{1}{\det(x)}
        \begin{bmatrix}x_0\\-\bar{x}\end{bmatrix}\right) = -\frac{1}{\det(x)^2}
        \partial_x\left( \det(x) \right)
        \begin{bmatrix}x_0\\-\bar{x}\end{bmatrix}^{\top}
     +\frac{1}{\det(x)}
        \begin{bmatrix}
          1 &  \\
           & -I_{n-1}
        \end{bmatrix} \\
  &= -\frac{2}{\det(x)^2}
        \begin{bmatrix}x_0\\-\bar{x}\end{bmatrix}
        \begin{bmatrix}x_0\\-\bar{x}\end{bmatrix}^{\!\top}
     +\frac{1}{\det(x)}
        \begin{bmatrix}
          1 &  \\
           & -I_{n-1}
        \end{bmatrix} = -2\left( x^{-1} \right) \left(x^{-1}\right)^{\top}
     +\frac{1}{\det(x)}
        \begin{bmatrix}
          1 &  \\
           & -I_{n-1}
        \end{bmatrix}.
\end{align*}

(iv) Let $\rho = \frac{\mu}{\det(x)}$, $\Lambda = I - \rho\begin{bmatrix}1 & \\ & -I_{n-1}\end{bmatrix}$, $v = x^{-1}$. Then $(\partial_z\operatorname{prox}_{\mu g}(z))^{-1} = \Lambda + 2\mu vv^\top$. By the SMW formula, we have
\begin{align*}
\partial_z\operatorname{prox}_{\mu g}(z) &= \Lambda^{-1} - \frac{2\mu\Lambda^{-1}vv^\top\Lambda^{-1}}{1 + 2\mu v^\top\Lambda^{-1}v} = \begin{bmatrix}\frac{1}{1-\rho} & \\ & \frac{1}{1+\rho}I_{n-1}\end{bmatrix}
- \frac{2\mu\begin{bmatrix}\frac{x_0}{1-\rho} \\ \frac{-\bar{x}}{1+\rho}\end{bmatrix}\begin{bmatrix}\frac{x_0}{1-\rho} \\ \frac{-\bar{x}}{1+\rho}\end{bmatrix}^\top}{\det(x)^2 + 2\mu\left(\frac{x_0^2}{1-\rho} + \frac{\|\bar{x}\|^2}{1+\rho}\right)} \\
&= \begin{bmatrix}\frac{1}{1-\rho} & \\ & \frac{1}{1+\rho}I_{n-1}\end{bmatrix}
- \frac{2\begin{bmatrix}\frac{x_0}{1-\rho}\\  \frac{-\bar{x}}{1+\rho}\end{bmatrix}\begin{bmatrix}\frac{x_0}{1-\rho} \\ \frac{-\bar{x}}{1+\rho}\end{bmatrix}^\top}{\frac{\det(x)}{\rho} + 2\left(\frac{x_0^2}{1-\rho} + \frac{\|\bar{x}\|^2}{1+\rho}\right)},
\end{align*}
It follows from $\rho = 1$ or $\rho \to 1$ that
$$
\partial_z\operatorname{prox}_{\mu g}(z) = \frac{1}{2}\begin{bmatrix}
1 & \frac{1}{x_0}\bar{x}^\top \\
\frac{1}{x_0}\bar{x} & I_{n-1}
\end{bmatrix}.
$$
This completes the proof.
\end{proof}

It follows from Lemma \ref{lem:socp} that $ \frac{1}{\sigma}(I -D) + \tau I = \frac{1}{\sigma}\left( \begin{bmatrix}
  \tilde{a}_0 & \\
  & \tilde{a}_1 I
\end{bmatrix} - auu^{\mathrm{T}} \right) :=  \frac{1}{\sigma}\left( \Lambda_1 - a uu^{\mathrm{T}} \right), \Lambda_1 = (1 +\sigma \tau )I - \Lambda.$ Consequently, we can obtain from the SMW formula that $(\frac{1}{\sigma}( I - D) + \tau I)^{-1} = \sigma (\Lambda_1^{-1} + c \Lambda_1^{-1} u u^{\mathrm{T}} \Lambda_1^{-1} ) $, where $c =  \frac{a}{1 - a{u}^{\mathrm{T}} \Lambda_1^{-1} {u}}$ is a constant. Hence, the following equality holds.
\[
\begin{aligned}
& \sigma D + D \left( \frac{1}{\sigma}(I- D) +\tau I \right)^{-1}D   \\
 = & \sigma \left( \Lambda + auu^{\mathrm{T}} \right) +  \sigma(\Lambda + a uu^{\mathrm{T}})(\Lambda_1^{-1} + c \Lambda_1^{-1} u u^{\mathrm{T}} \Lambda_1^{-1} )(\Lambda + a uu^{\mathrm{T}}) \\
 = & \sigma \left(\Lambda \Lambda_1^{-1} \Lambda + \Lambda + c\Lambda \Lambda_1^{-1}uu^{\mathrm{T}}  \Lambda_1^{-1}\Lambda +  a(1+c\gamma) \Lambda \Lambda_1^{-1}uu^{\mathrm{T}} + a(1 + c\gamma)uu^{\mathrm{T}} \Lambda_1^{-1}\Lambda + \left( a + a^2 \gamma + a^2 c \gamma^2\right) uu^{\mathrm{T}}  \right) \\
 = & \sigma \left(\tilde{\Lambda} + \begin{bmatrix}
     b_0 u_0^2 & b_1 u_0 u_1^{\mathrm{T}} \\
    b_1 u_0 u_1 & b_2 u_1 u_1^{\mathrm{T}}
 \end{bmatrix} \right),  \\
\end{aligned}
\]
where $\tilde{\Lambda} = \Lambda \Lambda_1^{-1} \Lambda + \Lambda , \gamma = u^{\mathrm{T}} \Lambda_1^{-1} u$, and $b_0,b_1,b_2$ are constants. Denote $\Lambda_1^{-1} \Lambda = \begin{bmatrix}
  c_0 & \\
  & c_1 I
\end{bmatrix}$, we have $b_0 = cc_0^2 + 2a(1+c\gamma)c_0 + a + a^2\gamma + a^2c\gamma^2, b_1 = cc_0c_1 + a(1+c\gamma)(c_0 + c_1) + a + a^2\gamma + a^2c\gamma^2, b_2 = cc_1^2 + 2a(1+c\gamma)c_1 + a + a^2\gamma + a^2c\gamma^2$. Consequently, let $\tilde{u} = [b_1/b_2 u_0; u_1] $, $ b_2\tilde{u} \tilde{u}^{\mathrm{T}} = \begin{bmatrix}
    b_1^2/b_2 u_0^2& b_1 u_0u_1^{\mathrm{T}} \\
    b_1 u_0u_1 & b_2 u_1 u_1^{\mathrm{T}}
\end{bmatrix}$, it follows that
\[
\overline{D} = \sigma D + D \left( \frac{1}{\sigma}(I- D) +\tau I \right)^{-1} D  =  \sigma \left(\tilde{\Lambda} + \begin{bmatrix}
    (b_0 - b_1^2/b_2) u_0^2 & 0 \\
   0  & {0}
\end{bmatrix} \right) + \sigma b_2 \tilde{u} \tilde{u}^{\mathrm{T}}.
\]
Hence, the linear system can be represented as a diagonal matrix plus a rank-one matrix, which is significant in constructing the Schur matrix when solving the linear system using direct methods such as Cholesky factorization.


 \subsection{Spectral functions}

 The spectral type functions include $p(\bm{X}) =  \lambda \|\bm{X}\|_*,  \lambda \|\bm{X}\|_2$ and $\delta_{\mathbb{S}_+^n}(\bm{X})$. For more details of the generalized Jacboian of spectral functions, we refer the readers to \cite{themelis2019acceleration}. We present a non-exhaustive introduction to the usually used spectral function in the following.  For a given $\bm{X} \in \mathbb{R}^{n_1 \times n_2}$, let the singular value decomposition of $\bm{X}$ denoted by $\bm{X} = U \Sigma V^{\mathrm{T}}$, then the proximal operator of $\lambda \|\bm{X}\|_*$ can be presented by:
\begin{equation} \label{prox:nuclear}
\text{prox}_{\lambda \|\cdot\|_*}(\bm{X}) = U \text{diag} \left( T_{\lambda} (\sigma(\bm{X})) \right) V^T,
\end{equation}
where $T_{\lambda}(\cdot)$ denotes the soft shrinkage operator. Without loss of generality, we consider the case that $n_2 \ge n_1$. Let $V = [V_1, V_2]$ with $V_1 \in \mathbb{R}^{n_1 \times n_1}$ and $V_2 \in \mathbb{R}^{n_1 \times (n_2 -n_1)}$, then one generalized Jacobian $D$ of \eqref{prox:nuclear} is
\begin{equation} \label{genJacobian-nuclear}
\begin{aligned}
    D(G) &= U \left[    \frac{\Omega_{\sigma,\sigma}^{\mu} + \Omega_{\sigma,-\sigma}^{\mu}}{2} \odot G_1 +   \frac{\Omega_{\sigma,\sigma}^{\mu} - \Omega_{\sigma,-\sigma}^{\mu}}{2} \odot G_1^{\top}, (\Omega_{\sigma,0}^{\mu} \odot\left( G_2 \right)) \right] V^{\top},
\end{aligned}
\end{equation}
where $\odot$ denotes the Hadamard product, $\sigma =[\sigma^{(1)};\cdots;\sigma^{(m)}] \in \mathbb{R}^m$  is the singular value of $\bm{X}$,  $ G_1 = U^{\top} G  V_1 \in \mathbb{R}^{n_1 \times n_1 }, G_2 = U^{\top} G V_2 \in \mathbb{R}^{n_1 \times  (n_2 - n_1)  }$ and $\Omega_{\sigma,\sigma}^{\lambda}$ is defined by:
\begin{equation}
    (\Omega_{\sigma,\sigma}^{\lambda})_{ij} := \begin{cases}
  \partial_B \prox_{\lambda \| \cdot \|_1}(\sigma_i ) , & \mbox{if } \sigma_i = \sigma_j, \\
  \left\{ \frac{\prox_{\lambda\|\cdot\|_1}(\sigma_i) -\prox_{\lambda\|\cdot\|_1}(\sigma_j) }{\sigma_i-\sigma_j} \right\}, & \mbox{otherwise}.
\end{cases}
\end{equation}

 For $p(\bm{X}) = \lambda \|\bm{X} \|_2$, its proximal operator can be represented by
\begin{equation} \label{prox:l2l2}
 \text{prox}_{\lambda \|\cdot\|_2}(\bm{X}) =    U\text{diag} \left( \lambda(\bm{X}) - \lambda P_{\Delta_n} (\lambda(\bm{X}) / \lambda) \right) V^T,
\end{equation}
where $P_{\Delta_n}$ denotes the projection onto simplex unit $\Delta_n := \{\bm{x} \in \mathbb{R}^n | \bm{1}^{\mathrm{T}}\bm{x} = 1, \bm{x} \ge 0 \}.$
Hence, the generalized Jacobian of \eqref{prox:l2l2} is  \eqref{genJacobian-nuclear} with
\begin{equation} \label{prox:l2l22}
    (\Omega_{\sigma,\sigma}^{\lambda})_{ij} := \begin{cases}
  \partial_B (\prox_{\lambda \| \cdot \|_{\infty}}(\sigma))_i , & \mbox{if } \sigma_i = \sigma_j, \\
  \left\{ \frac{(\prox_{\lambda\|\cdot\|_{\infty}}(\sigma))_i -(\prox_{\lambda\|\cdot\|_{\infty} }(\sigma))_j }{\sigma_i-\sigma_j} \right\}, & \mbox{otherwise}.
\end{cases}
\end{equation}
For $p(\bm{X}) = \delta_{\mathbb{S}_+^n}( \bm{X} )$, the corresponding generalized Jacobian operator can also be written as
\begin{equation} \label{genJacobian-sdp}
 D_{\mathbb{S}_+^n }(H):=Q\left(\Sigma \odot\left(Q^{\mathrm{T}} H Q\right)\right) Q^{\mathrm{T}}, \quad H \in \mathbb{S}^n,
\end{equation}
where
\begin{equation}\label{eq:sdp+:sigma}
    \Sigma=\left[\begin{array}{cc}
E_{\alpha \alpha} & v_{\alpha \bar{\alpha}} \\
v_{\alpha \bar{\alpha}}^{\mathrm{T}} & 0
\end{array}\right], \quad v_{i j}:=\frac{\lambda_i}{\lambda_i-\lambda_j}, \quad i \in \alpha, \quad j \in \bar{\alpha},
\end{equation}
where $E_{\alpha \alpha} \in \mathbb{S}^{|\alpha|}$ is the matrix of ones.

For given $\sigma$, denote $D^{\tau} = \frac{1}{\sigma}(I -D) + \tau I$, we next introduce a lemma that will be used to obtain $(D^{\tau })^{-1}$ and preserve the low-rank structure. Since it can be verified directly,  we omit the proof.

\begin{lemma}
Let $\mathcal{T}: \mathbb{R}^{n \times n} \rightarrow \mathbb{R}^{n \times n} : \mathcal{T}(G) = \Omega_1 \odot G + \Omega_2 \odot G^{\mathrm{T}}$, then the inverse of  $\mathcal{T}$ is
\[
\mathcal{T}^{-1}(G) = (\Omega_s + \Omega_a) \odot G + (\Omega_s - \Omega_a) \odot G^{\mathrm{T}},
\]
where $\Omega_s = 1./[2(\Omega_1 + \Omega_2)],\, \Omega_a = 1./[2(\Omega_1 - \Omega_2)]$ and $./$ denotes elementwise division.
\end{lemma}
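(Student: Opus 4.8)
The plan is to verify directly that the proposed map is a two-sided inverse of $\mathcal{T}$, exploiting the fact that $\mathcal{T}$ and the claimed inverse both belong to the family of linear maps $\mathcal{M}_{A,B}(G) := A \odot G + B \odot G^{\mathrm{T}}$, and that this family is closed under composition, so inverting $\mathcal{T}$ reduces to a decoupled collection of scalar equations. Throughout I use that the matrices $\Omega_1,\Omega_2$ appearing here (the symmetric and antisymmetric combinations of the divided-difference matrices $\Omega^{\mu}_{\sigma,\sigma}$ and $\Omega^{\mu}_{\sigma,-\sigma}$) are symmetric, so that $(\mathcal{M}_{A,B}(G))^{\mathrm{T}} = A \odot G^{\mathrm{T}} + B \odot G$ whenever $A,B$ are symmetric.

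First I would establish the composition rule. Using the transpose identity above,
\[
\mathcal{M}_{A',B'}\big(\mathcal{M}_{A,B}(G)\big) = (A'\odot A + B'\odot B)\odot G + (A'\odot B + B'\odot A)\odot G^{\mathrm{T}},
\]
that is, $\mathcal{M}_{A',B'}\circ\mathcal{M}_{A,B} = \mathcal{M}_{A'\odot A + B'\odot B,\; A'\odot B + B'\odot A}$, and the identity map is $\mathcal{M}_{E,0}$ with $E$ the all-ones matrix. Hence $\mathcal{M}_{A',B'}$ inverts $\mathcal{T}=\mathcal{M}_{\Omega_1,\Omega_2}$ exactly when $A'\odot\Omega_1+B'\odot\Omega_2=E$ and $A'\odot\Omega_2+B'\odot\Omega_1=0$, a pair of equations holding independently at each index.

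The key step is then to decouple these entrywise equations. Adding and subtracting them gives $(A'+B')\odot(\Omega_1+\Omega_2)=E$ and $(A'-B')\odot(\Omega_1-\Omega_2)=E$, whence $A'+B' = 1./(\Omega_1+\Omega_2) = 2\Omega_s$ and $A'-B' = 1./(\Omega_1-\Omega_2) = 2\Omega_a$ in the elementwise notation of the statement, so that $A'=\Omega_s+\Omega_a$ and $B'=\Omega_s-\Omega_a$, precisely the claimed formula; symmetry of $A',B'$ is inherited from that of $\Omega_1,\Omega_2$, so the composition rule applies. Equivalently, one may read this off blockwise: the maps restrict to invariant two-dimensional subspaces $\mathrm{span}\{E_{ij},E_{ji}\}$ (and one-dimensional subspaces on the diagonal), on which $\mathcal{T}$ acts by the symmetric $2\times2$ matrix $\left(\begin{smallmatrix}(\Omega_1)_{ij} & (\Omega_2)_{ij}\\ (\Omega_2)_{ij} & (\Omega_1)_{ij}\end{smallmatrix}\right)$, and the stated inverse is nothing but its entrywise inversion.

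The only delicate points, and the main obstacle to a fully rigorous statement, are the transpose bookkeeping (which forces the symmetry hypothesis on $\Omega_1,\Omega_2$) and the degenerate indices. Invertibility of $\mathcal{T}$ requires that every entry of $\Omega_1+\Omega_2$ and of $\Omega_1-\Omega_2$ be nonzero, so the elementwise reciprocals defining $\Omega_s,\Omega_a$ are well defined; on the diagonal the $\Omega_a$ contributions cancel identically in $(\Omega_s+\Omega_a)\odot G + (\Omega_s-\Omega_a)\odot G^{\mathrm{T}}$, so the formula remains correct even though $(\Omega_a)_{ii}$ is irrelevant there. Checking $\mathcal{T}\circ\mathcal{M}_{A',B'}=\mathrm{Id}$ follows from the symmetry of the composition law in its two arguments, completing the verification.
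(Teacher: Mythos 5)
Your proof is correct. The paper itself omits the argument (``Since it can be verified directly, we omit the proof''), and your direct verification via the composition rule for maps $G \mapsto A \odot G + B \odot G^{\mathrm{T}}$, decoupling the entrywise equations by adding and subtracting, is exactly the intended computation; your added care about the symmetry hypothesis on $\Omega_1,\Omega_2$, the nonvanishing of the entries of $\Omega_1 \pm \Omega_2$, and the cancellation of the $\Omega_a$ terms on the diagonal makes the statement's implicit assumptions explicit rather than departing from the paper's approach.
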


According to the above lemma, $(D^{\tau})^{-1}$ can be represented as
\begin{equation}
\begin{aligned}
(D^{\tau})^{-1}(G) &= U \ \left[ (\Omega^{\tau}_s + \Omega^{\tau}_a) \odot G_1 +    (\Omega^{\tau}_s - \Omega^{\tau}_a) \odot G_1^{\mathrm{T}} , (1./\Omega^{\tau}_3  \odot G_2 ) \right] V^{\mathrm{T}} + \sigma/(1+\sigma \tau) G.
\end{aligned}
\end{equation}
where $\Omega^{\tau}_s = 1./[2(\Omega_1^{\tau} + \Omega_2^{\tau} ) ] - \sigma/(1+\sigma \tau)E ,\, \Omega^{\tau}_a = 1./[2(\Omega_1^{\tau} -\Omega_2^{\tau} ) ],\, E$ is the matrix of ones with the correct size. The details of the computational process are summarized in Algorithm \ref{alg:computeD2}, from which the low-rank structure can be exploited effectively, and the total computational cost for each inner iteration reduces to $O(nr^2)$.

  \begin{algorithm}[h]
\caption{The process of computing $(D^{\tau})^{-1}(G)$.}
\begin{algorithmic}[1] \label{alg:computeD2}
\Require $G,(\Omega_1^{\tau})_{\alpha \alpha},(\Omega_1^{\tau})_{\alpha \bar{\alpha}}, (\Omega_2^{\tau})_{\alpha \alpha},\,(\Omega_2^{\tau})_{\alpha \bar{\alpha}},\,$ $(\Omega_1^{\tau})_{\alpha \beta}, U=[U_{\alpha},U_{\bar{\alpha}}],V = [V_{\alpha},\,V_{\bar{\alpha}},\, V_{\beta}]$, where $U_{\alpha} \in \mathbb{R}^{n_1 \times r}, U_{\bar{\alpha}} \in \mathbb{R}^{n \times (n_1-r)}, V_{\alpha} \in \mathbb{R}^{n_2 \times r}, V_{\bar{\alpha}} \in \mathbb{R}^{n_2 \times (n_2-r)},
V_{\beta} \in  \mathbb{R}^{n_2 \times (n_2 -n_1)}$, penalty parameter $\sigma$ and regularizer parameter $\tau$.
\Ensure $(D^{\tau})^{-1}(G)$
\State Compute $(G_1)_{\alpha\alpha}, (G_1)_{\alpha\bar{\alpha}}, (G_1)_{\bar{\alpha}\alpha}, (G_1)_{\alpha\beta}$  where
\[
\begin{aligned}
(G_1)_{\alpha\alpha} &= U_{\alpha}^{\mathrm{T}}\,\, G\,\, (V_{\alpha}),\qquad(G_1)_{\alpha\bar{\alpha}}= U_{\alpha}^{\mathrm{T}} \,\,G \,\,(V_{\bar{\alpha}}), \\
(G_1)_{\bar{\alpha}\alpha} &= U_{\bar{\alpha}}^{\mathrm{T}} \,\,G \,\,(V_{\bar{\alpha}}), \qquad
(G_1)_{\alpha\beta} = U_{\alpha}^{\mathrm{T}}\,\, G\,\, (V_{\beta}).
\end{aligned}
\]
\State Compute
\[
 G_2 = \begin{bmatrix}
(G_2)_{\alpha \alpha} & (G_2)_{\alpha \bar{\alpha}} & (G_2)_{\alpha \beta}\\
(G_2)_{\bar{\alpha} \alpha} & 0 &0
 \end{bmatrix},
\]
where
\[
\begin{aligned}
(G_2)_{\alpha \alpha} &= (\Omega_1^{\tau})_{\alpha \alpha} \odot (G_1)_{\alpha \alpha} + (\Omega_2^{\tau})_{\alpha \alpha} \odot ((G_1)_{\alpha \alpha})^{\mathrm{T}} ,\\
(G_2)_{\alpha \bar{\alpha}} &= (\Omega_1^{\tau})_{\alpha \bar{\alpha}} \odot (G_1)_{\alpha \bar{\alpha}} + (\Omega_2^{\tau})_{\alpha \bar{\alpha}} \odot ((G_1)_{\bar{\alpha} \alpha})^{\mathrm{T}},\\
(G_2)_{\bar{\alpha} \alpha} &= ( (\Omega_1^{\tau})_{\alpha \bar{\alpha}})^{\mathrm{T}} \odot (G_1)_{\bar{\alpha} \alpha} + ((\Omega_2^{\tau})_{\alpha \bar{\alpha}})^{\mathrm{T}} \odot ((G_1)_{\alpha \bar{\alpha}})^{\mathrm{T}}, \\
(G_2)_{\alpha \beta} &= (\Omega_1^{\tau})_{\alpha \beta} \odot (G_1)_{\alpha \beta}.
\end{aligned}
\]
\State Compute $G_3 = G_{12} + G_{11} + G_{21} + G_{13}$ where
\[
\begin{aligned}
G_{11} & = U_{\alpha}\,\, (G_2)_{\alpha \alpha} \,\,V_{\alpha}^{\mathrm{T}}, \qquad
G_{12}  = U_{\alpha}\,\, (G_2)_{\alpha \bar{\alpha}} \,\,V_{\bar{\alpha}}^{\mathrm{T}}, \\
G_{21} & = U_{\bar{\alpha}}\,\, (G_2)_{\bar{\alpha} \alpha} \,\,V_{\alpha}^{\mathrm{T}}, \qquad
G_{13}  = U_{\alpha}\,\, (G_2)_{\alpha \alpha} \,\,V_{\beta}^{\mathrm{T}}.
\end{aligned}
\]
\State Compute $(D^{\tau})^{-1}(G) = G_3 + \sigma/(1+\sigma \tau)G$.
\end{algorithmic}
\end{algorithm}

\subsection{Fused regularizer}
For the fused regularizer $p(x) = \lambda_1 \|x\|_1 + \lambda_2 \|Fx\|_1,$ where $F(x) = [x_2 - x_1,\cdots,x_n - x_{n-1}]$,  it follows from \cite[Proposition 4]{li2018efficiently} that the proximal operator of $p$ is
\begin{equation} \label{prox-fused}
\text{prox}_p(\bm{v}) = \text{prox}_{\lambda_1 \|\cdot \|_1}(x_{\lambda_2}(\bm{v})) = \text{prox}_{\lambda_1 \|\cdot \|_1}( \bm{v} - F^{\mathrm{T}}z_{\lambda_2}(F\bm{v})  ),
\end{equation}
 where
$
z_{\lambda_2}(u) := \operatorname{argmin}_z \left\{ \frac{1}{2} \| F^T z \|^2 - \langle z, u \rangle \ \bigg| \ \| z \|_\infty \leq \lambda_2 \right\} , \forall u \in \mathbb{R}^{n-1}.
$ To characterize the generalized Jacobian of \eqref{prox-fused}. we define the  multifunction $\mathcal{M}: \mathbb{R}^n \rightarrow \mathbb{R}^{n \times n}$ as:
\begin{equation} \label{genJacobian:fused}
\mathcal{M}(v):= \{M \in \mathbb{R}^{n \times n}| M = \Theta P, \Theta \in \partial_{B}\prox_{\lambda_1 \|\cdot\|_1}(x_{\lambda_2}), P \in \mathcal{P}_x(v) \},
\end{equation}
where
$
\mathcal{P}_x(v):= \{\hat{P} \in \mathbb{R}^{n-1 \times n-1}| \hat{P} = I - F^{\mathrm{T}}(\Sigma_K FF^{\mathrm{T}}\Sigma_K )^{\dagger}F, K \in \mathcal{K}_z(v) \},
$ $\Sigma_K = \text{Diag}(\sigma_K) \in \mathbb{R}^{(n-1) \times (n-1)}$ and
\[
(\sigma_K)_i = \begin{cases}
                 0, & \mbox{if } i \in K, \\
                 1, & \mbox{otherwise}, i = 1,\cdots,n-1.
               \end{cases}
\]
It follows from \cite[Theorem 2]{li2018efficiently} that $\mathcal{M}$ is nonempty and can be regarded as the generalized Jacobian of
$\text{prox}_p$ at $\bm{v}$. Furthermore, any element in $\mathcal{M}$ is symmetric and positive semidefinite. Let $\Gamma:= I_n - F^{\mathrm{T}}(\Sigma FF^{\mathrm{T}}\Sigma)^{\dagger}F  = \text{Diag}(\Gamma_1,\cdots,\Gamma_N),$ where
\[
\Gamma_i = \begin{cases}
             \frac{1}{n_i + 1}\mathbf{E}_{n_i+1}, & \mbox{if } i \in J, \\
             I_{n_i}, & \mbox{if } i \in \{1,N\}, \\
             I_{n_i-1}, & \mbox{otherwise}.
           \end{cases}
\]
It follows that $\Gamma = H + UU^{\mathrm{T}} = H + U_JU_J^{\mathrm{T}},$ where $H \in \mathbb{R}^{n \times n}$ is an N-block diagonal matrix given by $H = \text{Diag}(\Upsilon_1,\dots,\Upsilon_N)$ with
\[
\Upsilon_i =
\begin{cases}
O_{n_i+1}, & \text{if } i\in J,\\
I_{n_i}, & \text{if } i\notin J\text{ and } i\in\{1,N\},\\
I_{n_i-1}, & \text{otherwise}.
\end{cases}
\]
Furthermore, the $(k,j)$-th entry of $ U \in \mathbb{R}^{n\times N} $ is  given by
\begin{equation}
U_{k,j}=
\begin{cases}
\dfrac{1}{\sqrt{n_j+1}}, & \text{if }\displaystyle\sum_{t=1}^{j-1} n_t + 1 \leq k \leq \sum_{t=1}^{j} n_t + 1,\quad\text{and }j\in J,\\[10pt]
0, & \text{otherwise,}
\end{cases}
\end{equation}
and  $U_J$ consists of the nonzero columns of $U$, i.e., the columns indexed by $J$. Then $D = \Theta P \in \mathcal{M},$ where $P = I - F^{\mathrm{T}}(\Sigma FF^{\mathrm{T}}\Sigma)^{\dagger}F,$ and
\[
\theta_i = \begin{cases}
             0, & \mbox{if } |(x_{\lambda_2}(v))_i |\le \lambda_1,  \\
             1, & \mbox{otherwise, \quad $i = 1,\cdots ,n $}.
           \end{cases}
\]
 Let $I_z(v) :=  \{i| |(z_{\lambda_2}(Bv))_i|=\lambda_2, i =1,\cdots,n-1 \}$, then $\Sigma = \text{Diag}(\sigma) \in \mathbb{R}^{(n-1) \times (n-1)}$ with
\[
\sigma_i = \begin{cases}
             0, & \mbox{if } i \in I_z(v), \\
             1 , & \mbox{otherwise}, i =1,\cdots,n-1.
           \end{cases}
\]
It follows that $\Theta \in \partial_{B} \prox_{\lambda_1 \|\cdot\|_1}(x_{\lambda_2}(v))$ and $P \in \mathcal{P}_x(v).$
Therefore, we have $M = \Theta (H + U_JU_J^{\mathrm{T}}) = \Theta (H + U_JU_J^{\mathrm{T}})\Theta, \, \Theta^2 = \Theta,\, H^2=H,\, \Theta H = \Theta H \Theta.$
Define the index sets
$
\alpha_1:= \{i| \theta_i = 1, i \in \{1,\cdots,n\} \},\quad \alpha_2:= \{i\,|h_i = 1, i \in \alpha_1 \},
$
where $\theta_i$ and $h_i$ are the $i$-th diagonal entries of matrices $\Theta$ and $H$ respectively. It then follows that
\[
\mathcal{B} \Theta H \mathcal{B}^{\mathrm{T}} = \mathcal{B} \Theta H \Theta \mathcal{B}^{\mathrm{T}} = \mathcal{B}_{\alpha_1} H \mathcal{B}_{\alpha_1}^{\mathrm{T}} = \mathcal{B}_{\alpha_2} \mathcal{B}_{\alpha_2}^{\mathrm{T}},
\]
where $\mathcal{B}_{\alpha_1} \in \mathbb{R}^{m \times |\alpha_1|}$ and $\mathcal{B}_{\alpha_2} \in \mathbb{R}^{m \times |\alpha_2|}$ are two submatrices obtained from $\mathcal{B}$ by extracting those columns with indices in $\alpha_1$ and $\alpha_2.$ Meanwhile, we hvae
\[
\mathcal{B}\Theta(U_j U_j^{\mathrm{T}})\mathcal{B}^{\mathrm{T}} = \mathcal{B}\Theta(U_j U_j^{\mathrm{T}}) \Theta \mathcal{B}^{\mathrm{T}} =
\mathcal{B}_{\alpha_1} \tilde{U} \tilde{U}^{\mathrm{T}} \mathcal{B}_{\alpha_1}^{\mathrm{T}},
\]
where $\tilde{U} \in \mathbb{R}^{|\alpha_1| \times r}$ is a submatrix obtained from $\Theta U_J$ by extracting those rows with
indices in $\alpha_1$ and the zero columns in $\Theta U_J$ are removed. Therefore, by exploiting the
structure in $D$, $\mathcal{B}D\mathcal{B}^{\mathrm{T}}$ can be expressed in the following form:
\[
\mathcal{B}D\mathcal{B}^{\mathrm{T}} = \mathcal{B}_{\alpha_2} \mathcal{B}_{\alpha_2}^{\mathrm{T}} + \mathcal{B}_{\alpha_1}\tilde{U}\tilde{U}^{\mathrm{T}}\mathcal{B}_{\alpha_1}^{\mathrm{T}}.
\]
For given $\sigma$, $D(D^{\tau})^{-1}D$, where $D^{\tau} = \frac{1}{\sigma}(I- D) + \tau I$ and $D = \Theta H \Theta,$  we note that $D = \Theta H \Theta = \Theta H$ holds since
$
\Theta = \text{Diag}(\Theta_1,\cdots,\Theta_N).
$
It yields that $M = \text{Diag}(\Theta_1 \Gamma_1,\cdots,\Theta_N \Gamma_N).$ Define $J := \{j| \, \Gamma_j \, \mbox{is not an identity matrix}, 1\le j \le N \}.$ It follows from $\text{supp}(Fx_{\lambda_2}(v)) \subset K$ that
$
\Theta_j = \mathbf{O}_{n_j +1}\, \mbox{or}\, I_{n_j +1 }, \forall j \in J,
$
which implies $\Theta_j \Gamma_j \in \mathbb{S}_+^{n_j + 1}, \forall j \in J$ and hence $D \in \mathbb{S}_+^n$. Consequently, we have
$D = \text{Diag}(D_1,\cdots,D_n),$
where
\[
D_i = \begin{cases}
        \frac{1}{n_i +1}\mathbf{E}_{n_i +1}, & \mbox{if } i \in J \,\mbox{and}\, \Theta_i =I_{n_i}, \\
        I_{n_i}, & \mbox{if }\, i \notin J\, \mbox{and}\, i \in \{i,N\}, \\
        0, & \mbox{if } \Theta_i = \bm{0}, \\
        I_{n_i -1}, & \mbox{otherwise}.
      \end{cases}
\]
According to the SMW formula,
the inverse of $(D^{\tau})^{-1}$ has the explicit solution:
\[
\begin{aligned}
&\left((\frac{1}{\sigma} + \tau) I - \frac{1}{\sigma}D \right)^{-1} = \left((\frac{1}{\sigma} + \tau) I - \frac{1}{\sigma}\Theta(H + U_JU_J^{\mathrm{T}}) \Theta \right)^{-1}  \\
&= \begin{cases}
                 \frac{\sigma}{1+\tau \sigma} I_{n_i +1} + \frac{1}{\tau(1 +\tau\sigma) (n_i +1)}\mathbf{E}_{n_1 +1} , & \mbox{if } \Theta_i = 1, \\
                 \frac{1}{\tau} I_{n_i} , & \mbox{if } \mbox{if }\, i \notin J\, \mbox{and}\, i \in \{i,N\}, \\
                 \frac{\sigma}{1+\sigma \tau} I_{n_i}, & \mbox{if } \Theta_i = \bm{0}, \\
                 \frac{1}{\tau} I_{n_i -1} , & \mbox{otherwise}.
               \end{cases}
\end{aligned}
\]
Consequently, $\overline{D} = \sigma D + D (D^\tau)^{-1} D$ can be represented by:
\[
\begin{aligned}
\overline{D}&= \begin{cases}
                 (\frac{1}{\tau} + \sigma ) \frac{1}{n_i +1 }\mathbf{E}_{n_1 +1}  , & \mbox{if } i \in J \,\mbox{and}\, \Theta_i =I_{n_i}, \\
                 (\frac{1}{\tau}+\sigma)  I_{n_i} , & \mbox{if } \mbox{if }\, i \notin J\, \mbox{and}\, i \in \{i,N\}, \mbox{and}\, \Theta_i =I_{n_i}, \\
                 0 , & \mbox{if } \Theta_i = \bm{0}, \\
                 (\frac{1}{\tau} +\sigma) I_{n_i -1} , & \mbox{otherwise},
               \end{cases}
\end{aligned}
\]
and $D(D^{\tau})^{-1}$ is:
\[
D (D^{\tau})^{-1} = \begin{cases}
                 \frac{1}{\tau (n_i +1 )} \mathbf{E}_{n_1 +1}  , & \mbox{if } i \in J \,\mbox{and}\, \Theta_i =I_{n_i}, \\
                 \frac{1}{\tau} I_{n_i} , & \mbox{if } \mbox{if }\, i \notin J\,,  i \in \{i,N\}, \mbox{and}\, \Theta_i =I_{n_i}, \\
                 0, & \mbox{if } \Theta_i = \bm{0}, \\
                 \frac{1}{\tau} I_{n_i -1} , & \mbox{otherwise}.
               \end{cases}
\]
Note that $\overline{D} = \Theta \tilde{H}$ and hence we have
$
\mathcal{B} \Theta \tilde{H} \mathcal{B}^{\mathrm{T}} = \mathcal{B} \Theta \tilde{H} \Theta \mathcal{B}^{\mathrm{T}} = \mathcal{B}_{\alpha_1} \widetilde{U}\widetilde{U} \mathcal{B}_{\alpha_1}^{\mathrm{T}} + \widetilde{\mathcal{B}}_{\alpha_2} \widetilde{\mathcal{B}}_{\alpha_2}^{\mathrm{T}},
$ where $\widetilde{U}$ and $\widetilde{\mathcal{B}}_{\alpha_2}$ are the scaling matrices of $U$ and $\mathcal{B}_{\alpha_2}$. This yields the decomposition: $\mathcal{B} \overline{D} \mathcal{B}^{\mathrm{T}} = W_1 W_2^{\mathrm{T}},$
where $W_1 := [\widetilde{\mathcal{B}}_{\alpha_2}, \mathcal{B}_{\alpha_1}\widetilde{U}\widetilde{U}^{\mathrm{T}} ] \in \mathbb{R}^{m \times (|\alpha_1| + |\alpha_2|)}, W_2 = [\widetilde{\mathcal{B}}_{\alpha_2}, \mathcal{B}_{\alpha_1}].$
Using the above decomposition, we obtain
\[
((\tau_1 +1)I + \mathcal{B}\overline{D}\mathcal{B}^{\mathrm{T}})^{-1} = \frac{1}{\tau_1 + 1} I_m - \frac{1}{\tau_1 + 1} W_1 ((\tau_1 + 1)I_{|\alpha_1| + |\alpha_2|}+ W_2^{\mathrm{T}}W_1)^{-1}W_2^{\mathrm{T}}.
\]
Hence, we only need to factorize an $(|\alpha_1| + |\alpha_2|) \times (|\alpha_1| + |\alpha_2|)$ matrix and the total computational cost is  merely $\mathcal{O}(|\alpha_1| + |\alpha_2|)^3 + \mathcal{O}(m(|\alpha_1|+|\alpha_2|)^2)$, matching the result  in \cite{li2018efficiently}. Consequently, we can solve the linear system using direct methods such as Cholesky factorization at low cost.
\section{Numerical experiments} \label{4}
In this section, we conduct numerous experiments on different kinds of problems to verify the efficiency and robustness of Algorithm \ref{alg:ssn}.   The criteria to measure the accuracy are based on the KKT optimality conditions:
\[
\eta = \max\{\eta_P,\eta_D,\eta_K,\eta_{\mathcal{P}} \},
\]
where
\begin{equation*}\label{criteria}
\begin{aligned}
\eta_P &:= \frac{\| \mathcal{A}(\bm{x}) - \Pi_{\mathcal{P}_2}(\mathcal{A}(\bm{x}) - \bm{y} ) \|  }{1+\|
\bm{x}\|} ,
\eta_D := \frac{\| \mathcal{A}^*(\bm{y}) + \mathcal{B}^*(\bm{z}) + \bm{s} - \mathcal{Q}(\bm{v})- \bm{c}\|  }{1 + \|\bm{c}\|}, \\
\eta_K &:= \min \left\{ \frac{\| \bm{x}-\text{prox}_{p}(\bm{x}-\bm{s} )\| }{1+\|\bm{s}\|+\| \bm{x}\| },  \frac{ \|\mathcal{Q}(\bm{v}) -\mathcal{Q}(\bm{x})\|_{\mathrm{F}} }{1 + \|\mathcal{Q}(\bm{v}) \| + \|\mathcal{Q}(\bm{x}) \|  } \right\}, \\
\eta_{\mathcal{P}} &:= \min \left\{
   \frac{\|\text{prox}_{ f}(\mathcal{B}\bm{x} -  \bm{z}) -  \mathcal{B}(\bm{x})\|}{1 + \|\mathcal{B}(\bm{x})\| + \|\bm{z}\| } \,\text{or}\, \frac{\| -\nabla f^*(-\bm{z}) - \mathcal{B}(\bm{x})\| }{1+ \|\mathcal{B}(\bm{x})\| + \|\bm{z} \| },
\frac{\|\Pi_{\mathcal{P}_1}(\bm{x} -  \bm{r} ) -  \bm{x}\|}{1+ \|\bm{x}\| + \|\bm{r}\| } \right\}.
\end{aligned}
\end{equation*}
Denote \texttt{pobj} and \texttt{dobj} as the primal and dual objective function value. We also compute the relative gap by
 \[
 \eta_g = \frac{\texttt{|pobj - dobj|} }{1 + \texttt{|pobj| + |dobj|}}.
 \]
Our software is available at \url{https://github.com/optsuite/SSNCVX}. All the experiments are done on a Linux server with a sixteen-core Intel Xeon Gold 6326 CPU and 256G memory.


\subsection{Lasso}
The Lasso problem corresponding to \eqref{general} can be expressed as
\begin{equation}\label{lasso}
\min_{\bm{x}} \quad  \frac{1}{2}\|\mathcal{B}(\bm{x})-\bm{b}\|^2 + \lambda \|\bm{x}\|_1.
\end{equation}
We test the problem on data from UCI\footnote{\url{ https://archive.ics.uci.edu/}}
   and LIBSVM dataset\footnote{\url{ https://www.csie.ntu.edu.tw/~cjlin/libsvmtools/datasets/}}.
These datasets are collected
from the 10-K Corpus \cite{kogan2009predicting} and the UCI data repository \cite{lichman2013uci}.
As suggested in \cite{huang2010predicting}, for the datasets \textbf{pyrim}, \textbf{triazines}, \textbf{abalone}, \textbf{bodyfat}, \textbf{housing}, \textbf{mpg}, and
\textbf{space\_ga}, we expand their original features by using polynomial basis functions over
those features \cite{li2018highly}. For example, the last digit in \textbf{pyrim5} indicates that an order 5 polynomial is used to generate the basis functions. This naming convention is also used
in the rest of the expanded data sets. These numerical instances, shown in Table \ref{tab:tensor-image}, can be
quite difficult in terms of the dimensions and the largest eigenvalue of $\mathcal{B}\mathcal{B}^*$,
which is denoted as $\lambda_{\max}(\mathcal{B}\mathcal{B}^*)$.

 In Table \ref{tab:1e3}, $m$ denotes the number of samples, $n$ denotes the number of features, and ``nnz'' denotes the number of nonzeros in the solution $x$  using the following estimation:
      \[
      \text{nnz}:= \min \{k| \sum_{i=1}^{k} |\hat{x}_i| \ge 0.999 \|x\|_1 \},
      \]
      where $\hat{x}$ is obtained by sorting $x$ such that $|\hat{x}_1| \ge |\hat{x}_2|  \ge \cdots \ge |\hat{x}_n|.$
      The algorithms to compare are SSNAL\footnote{\url{ https://github.com/MatOpt/SuiteLasso}}, SLEP \cite{liu2009slep}, and the ADMM algorithm. The numerical results for different choice of $\lambda$, i.e., $\lambda = 10^{-3}\|\mathcal{B}^{\mathrm{T}}\bm{b}\|_{\infty}$ and $\lambda = 10^{-4}\|\mathcal{B}^{\mathrm{T}}\bm{b}\|_{\infty}$ and different algorithms are given in Tables \ref{tab:1e3} and \ref{tab:1e4}, where "nnz" denotes the number of nonzeros in the solution. We can see that both SSNCVX and SSNAL have successfully solved all problems, while other first-order methods can not. Furthermore, SSNCVX is competitive with SSNAL in all the tested Lasso problems, demonstrating its superiority in solving Lasso problems. For example, for the instance \textbf{log1p.E2006.train}, SSNCVX is twice as fast as SSNAL, while under the maximum time limit, SLEP and ADMM only achieve accuracies of 2.0e-2 and 1.2e-1, respectively.

\begin{table}[H]
\centering

\begin{tabular}{|c|c|c|}
\cline{1-3}
Probname      &  $(m,n)$ & $\lambda_{\max}(\mathcal{B}\mathcal{B}^*)$  \\ \cline{1-3}
E2006.train & (3308, 72812) & 1.912e+05 \\ \cline{1-3}
log1p.E2006.train  & (16087,4265669) & 5.86e+07  \\ \cline{1-3}
 E2006.test & (3308,72812) &  4.79e+04 \\ \cline{1-3}
 log1p.E2006.test & (3308,1771946) & 1.46e+07 \\ \cline{1-3}
 pyrim5 &  (74,169911) & 1.22e+06 \\ \cline{1-3}
 triazines4 & (186,557845) & 2.07e+07 \\ \cline{1-3}
abalone7 &  (4177,6435) & 5.21e+05 \\ \cline{1-3}
bodyfat7 & (252,116280) & 5.29e+04 \\ \cline{1-3}
housing7 & (506,77520) & 3.28e+05 \\ \cline{1-3}
mpg7 & (392,3432) & 1.28e+04 \\ \cline{1-3}
spacega9 & (3107,5005) & 4.01e+03  \\ \cline{1-3}
  \end{tabular}
  \caption{
Statistics of the UCI test instances.}\label{tab:tensor-image}
\end{table}

\begin{table}[h]
\begin{tabular}{|c|c|cc|cc|cc|cc|}
\cline{1-10}
\multirow{2}{*}{id}&\multirow{2}{*}{nnz} &  \multicolumn{2}{c|}{SSNCVX} & \multicolumn{2}{c|}{SSNAL} & \multicolumn{2}{c|}{SLEP} & \multicolumn{2}{c|}{ADMM} \\
\cline{3-10}
&  &    $\eta$     & time        &  $\eta$  & time & $\eta$   & time & $\eta$  & time\\
\cline{1-10}
uci\_CT & 13  & 7.6e-7  & \textbf{0.64}    & 4.4e-13 & 0.86 &  2.2e-2  & 35.95 &  7.7e-3 & 46.02\\
\cline{1-10}
log1p.E2006.train & 5  & 5.4e-7  & \textbf{17.3}    & 1.5e-11 & 36.0 & 2.0e-2  & 1850.15 & 1.2e-1 & 3604.34\\
\cline{1-10}
E2006.test  & 1  & 2.2e-11  & \textbf{0.17}    & 4.3e-10 & 0.28 &  7.5e-12   & 1.11 & 7.9e-7 & 428.64 \\
\cline{1-10}
log1p.E2006.test   & 8 & 3.3e-8  & \textbf{2.83}    & 2.5e-10 & 5.12 & 4.8e-2  & 447.56  & 1.2e-1  & 3603.64 \\
\cline{1-10}
pyrim5  & 72  & 4.2e-16 & \textbf{1.82}    & 5.7e-8 & 2.16 & 2.4e-2  & 106.09  & 1.5e-3  & 3600.52 \\
\cline{1-10}
triazines4 & 519  & 2.6e-13  & \textbf{10.64}    & 3.4e-9 & 11.23 &  8.3e-2 & 246.11  & 9.7e-3 & 3603.99 \\
\cline{1-10}
abalone7 & 24  & 4.6e-11 & \textbf{0.75}    & 1.8e-9 & 1.06 & 2.5e-3 & 34.57 & 3.7e-4  & 540.27 \\
\cline{1-10}
bodyfat7 & 2 & 4.8e-13  & \textbf{0.79}    & 1.4e-8 & 1.08 & 1.9e-6 & 28.10 & 8.4e-4 & 3609.63 \\
\cline{1-10}
housing7 & 158  & 5.1e-13  & \textbf{1.83}    & 6.3e-9 & 1.74 &  1.3e-2  & 46.60 & 1.1e-2 & 3601.26 \\
\cline{1-10}
mpg7& 47 & 4.4e-16  & \textbf{0.10}   & 1.5e-8  & 0.14 & 7.4e-5   & 0.69 & 1.0e-6 & 63.41 \\
\cline{1-10}
spacega9 & 14  & 4.7e-15  & \textbf{0.25}   & 9.7e-9 & 1.01 & 1.9e-8   & 21.12 & 1.0e-6 & 294.52 \\
\cline{1-10}
E2006.train & 1   & 3.9e-9  & \textbf{0.44}    & 4.4e-10 & 0.87 &  1.4e-11   & 1.13 & 4.4e-5 & 1149.22\\
\cline{1-10}
\end{tabular}
\caption{The results on Lasso problem ($\lambda = 10^{-3}\|\mathcal{B}^{\mathrm{T}}\bm{b}\|_{\infty}$).}\label{tab:1e3}
\end{table}

\begin{table}[h]
\begin{tabular}{|c|c|cc|cc|cc|cc|}
\cline{1-10}
\multirow{2}{*}{id} & \multirow{2}{*}{nnz} & \multicolumn{2}{c|}{SSNCVX} & \multicolumn{2}{c|}{SSNAL} & \multicolumn{2}{c|}{SLEP} & \multicolumn{2}{c|}{ADMM} \\
\cline{3-10}  &  &  $\eta$     & time        &  $\eta$  & time & $\eta$  & time & $\eta$  & time\\
\cline{1-10}
uci\_CT  & 44 & 2.6e-7  & \textbf{1.26}    & 2.9e-12 & 1.75 & 1.8e-1 & 41.63 & 2.0e-3 & 49.88\\
\cline{1-10}
log1p.E2006.train & 599   & 3.0e-7  & \textbf{33.92}    & 5.9e-11 & 68.83 &  3.3e-2 & 1835.32 & 1.2e-1 & 3608.17 \\
\cline{1-10}
E2006.test& 1   & 2.6e-14  & \textbf{0.20}    & 3.7e-9 & 0.29 &  2.4e-12 & 0.38  & 9.0e-7 & 268.11\\
\cline{1-10}
log1p.E2006.test & 1081   & 8.8e-9  & \textbf{13.72}    & 2.7e-10 & 30.1 & 7.5e-2  & 455.56 & 1.6e-1 & 3606.60 \\
\cline{1-10}
pyrim5  & 78 & 5.6e-16  & \textbf{2.01}    & 5.0e-7 & 2.59 & 1.1e-2  & 108.93  & 3.1e-3 & 3601.09 \\
\cline{1-10}
triazines4 & 260  & 9.5e-16  & \textbf{18.48}    & 8.3e-8 & 34.44 & 9.2e-2 & 187.45  & 1.2e-2 & 3604.48 \\
\cline{1-10}
abalone7 & 59  & 6.1e-12  & \textbf{1.63}    & 1.2e-8 & 2.00 & 1.5e-2 & 43.91 & 1.0e-6 & 356.34 \\
\cline{1-10}
bodyfat7 & 3 & 1.0e-16 & \textbf{1.14}    & 9.7e-8 & 1.51 & 6.1e-4 & 41.98  & 1.3e-4 & 3601.89 \\
\cline{1-10}
housing7 & 281  & 2.6e-11  & \textbf{2.51}    & 1.2e-7 & 2.52 & 4.1e-2 & 52.60 & 3.6e-4 & 3601.09 \\
\cline{1-10}
mpg7 & 128 & 1.8e-15  & \textbf{0.11}   & 6.9e-8  & 0.18 & 5.8e-4  & 0.76 & 9.9e-7 & 11.67 \\
\cline{1-10}
spacega9  & 38 & 3.1e-12  & \textbf{0.53}   & 3.5e-7 & 0.72 & 9.0e-5  & 22.96 & 1.0e-6 & 53.23\\
\cline{1-10}
E2006.train & 1  & 5.6e-9  & \textbf{0.75}    & 4.4e-9 & 0.88 & 1.0e-11 & 1.39 & 4.4e-5 & 1132.34\\
\cline{1-10}
\end{tabular}
\caption{The results of tested algorithms on Lasso problem ($\lambda = 10^{-4}\|\mathcal{B}^{\mathrm{T}}\bm{b}\|_{\infty}$).}\label{tab:1e4}
\end{table}

\subsection{Fused Lasso}
The Fused Lasso problem corresponding to \eqref{general} can be expressed as
\begin{equation}\label{fused-lasso}
\min_{\bm{x}} \quad  \frac{1}{2}\|\mathcal{B}(\bm{x})-\bm{b}\|^2 + \lambda_1 \|\bm{x}\|_1 + \lambda_2 \| F \bm{x} \|.
\end{equation}
We compare SSNCVX with SSNAL \cite{li2018efficiently}, ADMM, and SLEP \cite{liu2009slep} solvers. Consistent with the Lasso problem, we also test the problems with data from the UCI data and the LIBSVM dataset.
The numerical experiments for UCI datasets are listed in Table \ref{tab:fus1e3}. It is shown that SSNCVX has comparable performance to SSNAL and better performance than ADMM and SLEP.

\begin{table}[h]
\begin{tabular}{|c|c|c|cc|cc|cc|cc|}
\cline{1-11}
\multirow{2}{*}{id} & \multirow{2}{*}{nnz($x$)}& \multirow{2}{*}{nnz($Bx$)} & \multicolumn{2}{c|}{SSNCVX} & \multicolumn{2}{c|}{SSNAL} & \multicolumn{2}{c|}{SLEP} & \multicolumn{2}{c|}{ADMM} \\ \cline{4-11}
 &  & & $\eta$     & time        & $\eta$  & time & $\eta$  & time & $\eta$  & time\\ \cline{1-11}
uci\_CT  & 8 & 1  & 6.3e-7  & \textbf{0.25}    & 7.9e-7 & 0.42 & 1.8e-6 & 2.06 & 7.7e-3 & 41.75 \\ \cline{1-11}
log1p.E2006.train  & 31 & 2  & 2.8e-7  & \textbf{10.43}    & 2.4e-7 & 14.02 & 1.2e-2 & 4889.15 & 1.2e-1 & 3623.18 \\ \cline{1-11}
E2006.test& 1 & 1 & 1.5e-7  & \textbf{0.17}    & 5.1e-7 & 0.33 & 4.8e-8 & 0.93 & 8.2e-7 & 1768.26\\ \cline{1-11}
log1p.E2006.test & 33 & 1 & 4.1e-7  & \textbf{2.60}    & 8.1e-7 & 2.74 & 1.2e-2 & 1690.60 & 2.4e-2 & 3601.25\\ \cline{1-11}
 pyrim5 & 1135 & 74 & 9.1e-7 & \textbf{2.34}    & 4.5e-7 & 3.40 & 3.4e-2 & 238.43 & 2.4e-3 & 3601.20 \\ \cline{1-11}
 triazines4 & 2666 & 206 & 2.1e-7 & \textbf{10.24}    & 9.8e-7 & 15.49 & 7.8e-2 & 585.70 & 2.8e-2 & 3601.89 \\ \cline{1-11}
  bodyfat7 & 63 & 8 & 3.0e-7 & \textbf{0.72}    & 7.2e-9 & 1.35 & 9.9e-7 & 41.13 & 3.5e-3 & 3612.99 \\ \cline{1-11}
  abalone7 & 1 & 1 & 1.6e-7 & \textbf{0.83}    & 5.3e-8 & 0.95 & 1.3e-3 & 32.51 & 6.4e-4 & 538.90 \\ \cline{1-11}
 housing7 & 205 & 47  & 7.6e-7  &\textbf{1.98}   & 8.2e-7 & 2.73 & 5.0e-3 & 117.07 & 2.2e-2 & 3600.28 \\ \cline{1-11}
  mpg7  & 42 & 20 & 1.9e-7  & \textbf{0.08}   & 1.8e-7 & 0.11 & 3.4e-6 & 3.19 & 6.3e-6 & 156.31 \\ \cline{1-11}
 spacega9  & 24 & 11 & 5.0e-8  & \textbf{0.27}   & 1.2e-7 & 0.44 & 6.1e-8 & 5.32 & 9.9e-7 & 337.14 \\ \cline{1-11}
E2006.train & 1 & 1 & 3.7e-7  & \textbf{0.42}    & 4.0e-8 & 0.98 & 9.7e-12 & 0.39 & 4.3e-5 & 1196.42\\ \cline{1-11}
 \end{tabular}
 \caption{The results of tested algorithms on Fused Lasso problem ($\lambda_1 = 10^{-3}  \|\mathcal{B}^*b\|_{\infty}$ and $\lambda_2 = 5 \lambda_1.$ ) }\label{tab:fus1e3}
\end{table}

\begin{table}[h]
\begin{tabular}{|c|c|c|cc|cc|cc|cc|}
\cline{1-11}
\multirow{2}{*}{id} & \multirow{2}{*}{nnz($x$)}& \multirow{2}{*}{nnz($Bx$)} & \multicolumn{2}{c|}{SSNCVX} & \multicolumn{2}{c|}{SSNAL} & \multicolumn{2}{c|}{SLEP} & \multicolumn{2}{c|}{ADMM} \\
\cline{4-11}
& & & $\eta$ & time & $\eta$ & time & $\eta$ & time & $\eta$ & time\\
\cline{1-11}
uci\_CT & 18 & 8 & 6.3e-7 & \textbf{0.40} & 8.9e-10 & 0.42 & 1.8e-6 & 2.06 & 7.7e-3 & 39.29 \\
\cline{1-11}
log1p.E2006.train & 8 & 3 & 7.0e-7 & \textbf{8.37} & 1.5e-7 & 12.6 & 1.2e-2 & 4889.15 & 1.2e-1 & 3606.14 \\
\cline{1-11}
E2006.test & 1 & 1 & 1.5e-7 & \textbf{0.17} & 2.9e-8 & 0.33 & 4.8e-8 & 0.93 & 7.7e-7 & 699.27 \\
\cline{1-11}
log1p.E2006.test & 32 & 5 & 3.1e-9 & \textbf{3.07} & 1.2e-8 & 3.31 & 1.2e-2 & 1690.60 & 7.9e-2 & 3601.20 \\
\cline{1-11}
pyrim5 & 327 & 97 & 9.1e-7 & \textbf{2.34} & 2.0e-7 & 3.06 & 3.4e-2 & 238.43 & 1.5e-3 & 3601.13 \\
\cline{1-11}
triazines4 & 1244 & 286 & 8.2e-7 & \textbf{10.51} &2.4e-7 & 12.63 & 7.8e-2 & 585.70 & 2.8e-2 & 3603.56 \\
\cline{1-11}
bodyfat7 & 2 & 3 & 2.8e-8 & \textbf{0.81} & 4.7e-8 & 0.89 & 9.9e-7 & 41.13 & 2.7e-3 & 3606.85 \\
\cline{1-11}
abalone7 & 26 & 15 & 3.7e-7 & \textbf{0.49} & 5.0e-9 & 1.17 & 1.3e-3 & 32.51 & 5.0e-4 & 545.23 \\
\cline{1-11}
housing7 & 131 & 117 & 6.4e-7 & \textbf{1.46} & 3.9e-7 & 2.4 & 5.0e-3 & 117.07 & 2.0e-2 & 3603.08 \\
\cline{1-11}
mpg7 & 32 & 39 & 6.7e-7 & \textbf{0.07} & 2.2e-7 & 0.15 & 3.4e-6 & 3.19 & 1.0e-6 & 77.58 \\
\cline{1-11}
spacega9 & 14 & 13 & 8.7e-7 & \textbf{0.22} & 1.7e-7 & 0.44 & 6.1e-8 & 5.32 & 1.0e-6 & 333.39 \\
\cline{1-11}
E2006.train & 1 & 1 & 4.2e-7 & \textbf{0.45} & 4.0e-7 & 1.12 & 9.7e-12 & 0.39 & 4.4e-5 & 1189.36 \\
\cline{1-11}
\end{tabular}
\caption{The results of tested algorithms on Fused Lasso problem ($\lambda_1 = 10^{-3} \|\mathcal{B}^*b\|_{\infty}$ and $\lambda_2 = \lambda_1.$ ) }\label{tab:fus1e31}
\end{table}

\subsection{QP}
The QP problem is also a special case of \eqref{general}. In this subsection, we consider solving portfolio optimization, an application of QP and is widely used in the investment community:
\begin{equation} \label{prob-QP}
    \min_{\bm{x}}  \iprod{\bm{x}}{\mathcal{Q}(\bm{x})} + \iprod{\bm{c}}{\bm{x}}, \quad \st ~~ \iprod{\bm{e}_n}{\bm{x}} = 1, ~~ \bm{x} \ge \bm{0},
\end{equation}
where $\bm{x}$ denotes the decision variable, $\mathcal{Q} \in \mathcal{S}_+^n$ denotes the data matrix, $\gamma > 0$, and $\bm{e}_n$ is the vector of ones. The $\mathcal{Q}$ and $\bm{c}$ are chosen from Maros-M\'{e}sz\'{a}ros dataset \cite{qpbenchmark} and synthetic data. For Maros-M\'{e}sz\'{a}ros dataset, we choose the problem whose dimension is more than 10000 since the data is highly sparse.
For synthetic data, we generate our test data randomly via the following Matlab script as follows \cite{liang2022qppal}:
\begin{center}
\begin{lstlisting}[language=Matlab]
p = 0.01*n;
F = sprandn(n, p, 0.1); D = sparse(diag(sqrt(p)*rand(n,1)));
Q = cov(F') + D;
c = randn(n,1);
\end{lstlisting}
\end{center}
where $\texttt{n}$ denots the dimision. We compare SSNCVX with the HIGHS~\cite{huangfu2018parallelizing} solver. The results are listed in Table \ref{tab:qp}. It is shown that SSNCVX can solve all the tested problems while HiGHS can not.

\begin{table}[!htb]
    \setlength{\tabcolsep}{4pt}
    \centering
\begin{tabular}{|c|c|c|c|c|c|c|c|c|}
    \hline
    &\multicolumn{3}{c|}{SSNCVX}&\multicolumn{3}{c|}{HIGHS} \\ \hline
    problem &  obj & $\eta$  & time &obj&$\eta$&time \\ \hline
    Aug2D & -1.0e+0 & 2.9e-11& \textbf{0.25}&-&-&-\\\hline
    Aug2DC & -1.0e+0 & 7.5e-13 & \textbf{0.20}&-&-&-\\\hline
    Aug2DCQP & -1.0e+0 & 7.5e-13 & \textbf{0.18}&-&-&-\\\hline
    Aug2DQP & -1.0e+0 & 1.7e-16 & \textbf{0.31}&-&-&-\\\hline
    BOYD1& -1.1e+4 & 2.0e-7 & \textbf{47.80}&-&-&-\\\hline
    BOYD2 & -1.0e+1 & 2.3e-9 & \textbf{0.29}&-1.0e+1&4.3e-6 &3667.91\\\hline
    CONT-100 & -3.3e-4 & 7.0e-12&  \textbf{1.23}&-3.3e-4&7.8e-4 &122.04\\\hline
    CONT-101 & -9.9e-5 &  0.0e+0 & \textbf{0.07}&-9.9e-5&4.5e-3 &3600.04\\\hline
    CONT-200 & -8.3e-5 &  4.3e-8 & \textbf{3.96}&-8.3e-5&3.2e-3 &3600.09\\\hline
    CONT-201 & -2.5e-5 &  0.0e+0 & \textbf{0.16}&-&-&-\\\hline
    CONT-300 & -1.1e-5 &  0.0e+0 & \textbf{0.24}&-1.1e-5&0.0e+0 &4011.90\\\hline
    DTOC-3 & 1.3e-8 & 8.9e-18 & \textbf{0.39}&-&-&-\\\hline
    LISWET1 & -1.1e+0 &  2.3e-18 & \textbf{0.15}&-1.1e+0&6.8e-6 &0.70\\\hline
    UBH1 & -0.0e+0 & 4.8e-9&  \textbf{0.28}&-&-&-\\\hline
    random512\_1 & -2.6e+0 &  7.2e-11 & \textbf{0.36}&-2.6e+0&2.1e-7 &1.10\\\hline
    random512\_2 & -2.2e+0 & 7.4e-13 & \textbf{0.40}&-2.2e+0&2.5e-7 &1.11\\\hline
    random1024\_1 & -2.3e+0 & 2.2e-9 & \textbf{1.41}&-2.3e+0&4.0e-7 &2.32\\\hline
    random1024\_2 & -2.5e+0 & 2.7e-8 & \textbf{0.81}&-2.5e+0&2.5e-7 &2.32\\\hline
    random2048\_1 & -2.6e+0 & 1.7e-7 & \textbf{3.40}&-2.6e+0&2.5e-7 &3.96\\\hline
    random2048\_2 & -2.2e+0 & 2.6e-10 & \textbf{2.92}&-2.2e+0&1.4e-7 &4.06\\\hline
\end{tabular}
    \caption{Computational results of tested algorihtms on portfolio optimization.}\label{tab:qp}
\end{table}

\subsection{SOCP}

The SOCP problem corresponding to \eqref{general} is formulated as:
\begin{equation} \label{socp2}
\begin{aligned}
    \min_{\bm{x}} \iprod{\bm{c}}{\bm{x}} \quad \st \, \mathcal{A}(\bm{x}) = \bm{b},~~ \bm{x} \in \mathcal{Q}^n,
\end{aligned}
\end{equation}
where $\mathcal{Q}^n = \mathcal{Q}_1 \times \mathcal{Q}_2 \times \cdots \times \mathcal{Q}_n$ and $\mathcal{Q}_i = \{(x_0, \bar{x}) \in \mathbb{R}^{n_i} | x_0 \geq \|\bar{x}\|_2 )\}$ represents second-order cone.
For the SOCP case, we test the CBLIB problems~\cite{friberg2016cblib} listed in Hans Mittelmann's SOCP Benchmark~\cite{mittelmann2003independent}. Table~\ref{tab:socp_results} compares the running time of SSNCVX with the commonly used solvers ECOS~\cite{domahidi2013ecos}, SDPT3~\cite{toh1999sdpt3}, and MOSEK~\cite{mosek} under a $3600$-second time limit.

Note that the MATLAB solvers (SSNCVX and SDPT3) solve the preprocessed datasets with preprocessing time excluded. This preprocessing, which typically requires several seconds, significantly reduced solution times for some instances (e.g., firL2a), making these solvers appear faster for such problems. However, as geometric means are calculated with a $10$-second shift, the exclusion has a negligible impact on the overall results. On these problems, SSNCVX is 70\% faster than SDPT3, though both remain slower than commercial solver MOSEK. Compared with SDPT3, SSNCVX also exhibits the additional advantage of handling sparse and dense columns separately. Notably, SSNCVX can solve problems like beam7 if we don't set a time limit, while SDPT3 fails due to out of memory.

\begin{scriptsize}
\begin{table}[h]

\begin{tabular}{|l|cc|cc|cc|cc|}
 \hline
 \multirow{2}{*}{id} & \multicolumn{2}{c|}{SSNCVX} & \multicolumn{2}{c|}{SDPT3 } & \multicolumn{2}{c|}{ECOS} & \multicolumn{2}{c|}{MOSEK} \\ \cline{2-9}
  & $\eta$ & time & $\eta$ & time & $\eta$ & time & $\eta$ & time \\ \hline
 beam7 & - & - & - & - & 1.0e-7 & 206.0 & 6.0e-4 & 19.7 \\ \hline
 beam30 & - & - & - & - & 3.0e-7 & 2464.7 & 3.0e-6 & 96.5 \\ \hline
 chainsing-50000-1 & 1.5e-7 & 5.8 & 6.9e-7 & 5.5 & - & - & 1.6e-6 & 3.8 \\ \hline
 chainsing-50000-2 & 7.3e-7 & 14.4 & 7.0e-7 & 9.5 & - & - & 1.0e-7 & 4.1 \\ \hline
 chainsing-50000-3 & 5.0e-9 & 15.7 & 1.4e-7 & 19.4 & - & - & 1.0e-8 & 2.0 \\ \hline
 db-joint-soerensen & - & - & - & - & - & - & 2.0e-8 & 36.3 \\ \hline
 db-plate-yield-line & 8.5e-7 & 597.2 & 8.7e-7 & 217.6 & - & - & 5.0e-7 & 6.2 \\ \hline
 dsNRL & 1.0e-6 & 859.2 & 8.9e-7 & 567.8 & - & - & 8.2e-10 & 67.1 \\ \hline
 firL1 & 5.3e-11 & 101.6 & 7.8e-7 & 582.0 & 3.0e-8 & 1305.2 & 3.1e-9 & 20.5 \\ \hline
 firL1Linfalph & 8.4e-7 & 509.6 & 7.5e-7 & 916.2 & 3.0e-8 & 2846.6 & 4.0e-9 & 91.8 \\ \hline
 firL1Linfeps & 7.0e-7 & 86.4 & 8.2e-7 & 179.1 & 2.0e-9 & 2530.8 & 3.0e-8 & 27.5 \\ \hline
 firL2a & 1.4e-8 & 0.4 & 6.1e-7 & 0.1 & 2.0e-9 & 944.6 & 2.0e-13 & 4.4 \\ \hline
 firL2L1alph & 1.1e-7 & 37.4 & 7.3e-7 & 131.7 & 3.0e-9 & 201.5 & 2.2e-10 & 5.8 \\ \hline
 firL2L1eps & 2.0e-9 & 159.5 & 6.2e-7 & 586.0 & 2.0e-8 & 796.6 & 3.5e-9 & 17.2 \\ \hline
 firL2Linfalph & 7.9e-7 & 89.1 & 7.9e-7 & 799.9 & - & - & 9.0e-9 & 41.7 \\ \hline
 firL2Linfeps & 5.2e-7 & 72.4 & 8.0e-9 & 251.2 & 5.0e-10 & 687.1 & 1.0e-8 & 29.9 \\ \hline
 firLinf & 1.4e-7 & 280.2 & 7.1e-7 & 576.7 & 5.0e-9 & 3478.7 & 1.0e-8 & 123.6 \\ \hline
 wbNRL & 8.7e-7 & 20.1 & 5.9e-7 & 151.2 & 5.0e-9 & 1332.6 & 2.4e-9 & 11.8 \\ \hline
 geomean & - & 155.0 & - & 267.8 & - & 1731.4 & - & 22.7 \\ \hline
\end{tabular}

\caption{The results on Hans Mittelmann's SOCP benchmark.}\label{tab:socp_results}
\end{table}
\end{scriptsize}

\subsection{SPCA}
The sparse PCA problem for a single component is
\[
\max_{\bm{y}} \bm{y}^T \bm{L} \bm{y}, \quad \text{s.t.} \quad \|\bm{y}\|_2 = 1, \quad \text{card}(\bm{y}) \leq k.
\]
The function $\text{card}(\cdot)$ refers to the number of nonzero elements. This problem can be expressed as a low-rank SDP:
\begin{equation}
    \min_{\bm{X}} -\langle \bm{L}  , \bm{X} \rangle + \lambda \|\bm{X} \|_1 ,~  \text{s.t.}~ \text{Tr}(\bm{X}) = 1, \quad \bm{X} \succeq 0.
\end{equation}
 We formulate $\bm{L}$ based on the covariance matrix of real data or use the random example in \cite{zhang2012sparse}. For random examples, $\bm{L}$ is generated by: $ \bm{L} = \frac{1}{\|\bm{u}\|_2} \bm{u} \bm{u}^T + VV^{\mathrm{T}},$
where $\bm{u} = [1, 1/2, \dots, 1/n]$ and each entry of $V \in \mathbb{R}^{n \times n}$ is randomly uniformly chosen from $[0,1]$.
We compare SSNCVX with SuperSCS \cite{sopasakis2019superscs}. The maximum iteration time is set to 3600s. The results are presented in Table \ref{tab:spca}. Compared with SuperSCS, SSNCVX solves SPCA faster and achieves higher accuracy.

\begin{table}[!htb]
    \setlength{\tabcolsep}{4pt}
    \centering
   Here's the modified table with scientific notation using "e" in LaTeX code:
\begin{tabular}{|c|c|c|c|c|c|c|c|}
    \hline
    &\multicolumn{3}{c|}{SSNCVX}&\multicolumn{3}{c|}{superSCS} \\ \hline
    problem &  obj &$\eta$  & time &obj&$\eta_{K}$&time \\ \hline
    20news & -3.3e+3  & 2.0e-12 & 0.8&-3.3e+3&1.0e-6 &9.6\\\hline
    bibtex & -1.8e+4  & 1.2e-11 & 76.6&-1.7e+4&2.7e-1 &3626.4\\\hline
    colon\_cancer & -1.8e+4  & 5.5e-12 & 45.9&-1.4e+4&4.9e-1 &3647.9\\\hline
    delicious & -7.5e+4  & 2.6e-12 & 2.9&-7.5e+4&2.5e-3 &2813.5\\\hline
    dna & -1.8e+3& 1.2e-13 & 0.3&-1.8e+3&1.0e-6 &29.2\\\hline
    gisette & -3.9e+5 & 2.5e-12 & 1190.0&-1.3e+5&7.0e-1 &3703.5\\\hline
    madelon & -9.5e+7  & 5.9e-15 & 16.7&-9.5e+7&4.4e-5 &3343.6\\\hline
    mnist & -2.0e+10 & 4.0e-17 & 15.7&-2.0e+10&1.0e-6 &195.4\\\hline
    protein & -3.0e+3  & 3.5e-11 & 3.7&-3.0e+3&8.7e-3 &2334.1\\\hline
    random1024\_1 & -5.2e+5  & 9.3e-18 & 2.8&-5.3e+5&3.2e-2 &3603.3\\\hline
    random1024\_2 & -5.2e+5  & 4.4e-18 & 2.7&-5.2e+5&1.9e-3 &3604.8\\\hline
    random1024\_3 & -5.2e+5  & 1.3e-17 & 2.8&-5.2e+5&1.4e-3 &3608.3\\\hline
    random2048\_1 & -2.1e+6 & 7.8e-18 & 3.3&-2.0e+6&2.3e-1 &3605.5\\\hline
    random2048\_2 & -2.1e+6 & 5.1e-18 & 3.5&-2.1e+6&5.9e-2 &3607.0\\\hline
    random2048\_3 & -2.1e+6 & 1.5e-18 & 2.3&-2.1e+6&1.5e-2 &3608.2\\\hline
    random4096\_1 & -8.4e+6& 8.2e-18 & 73.4&-1.0e+0&N/A &3655.4\\\hline
    random4096\_2 & -8.4e+6 & 3.5e-18 & 73.1&-8.3e+6&1.2e-2 &3638.0\\\hline
    random4096\_3 & -8.4e+6 & 6.7e-19 & 72.4&-8.4e+6&9.6e-3 &3645.0\\\hline
    random512\_1 & -1.3e+5  & 4.3e-18 & 0.6&-1.3e+5&1.0e-6 &252.0\\\hline
    random512\_2 & -1.3e+5 & 1.1e-17 & 0.6&-1.3e+5&8.1e-3 &2938.5\\\hline
    random512\_3 & -1.3e+5  & 5.7e-18 & 0.6&-1.3e+5&8.2e-3 &2802.0\\\hline
    usps & -1.2e+5 &  2.4e-13 & 1.1&-1.2e+5&1.0e-6 &229.8\\\hline
\end{tabular}
    \caption{Computational results of SSNCVX and superSCS on SPCA.}\label{tab:spca}
\end{table}

\subsection{LRMC}
Low-rank matrix recovery (LRMC) is a classical problem in image processing \cite{wen2012solving}.
The LRMC problem corresponding to \eqref{general} is represented by
\begin{equation} \label{prob:LRMC}
\min_{\bm{X}} \|\mathcal{B}(\bm{X}) - \bm{B}\|_{\mathrm{F}}^2 + \lambda \|\bm{X}\|_*.
\end{equation}
We compare SSNCVX with classical ADMM, proximal gradient, and accelerated proximal gradient method on 8 images. The tested images are listed in Figure \ref{fig:three_subfigures2}. The tested images are corrupted by randomly choosing 50 percent of the pixels. The results are listed in Table \ref{tab:LRMC}. It is shown that SSNCVX not only has higher accuracy but also is the fastest compared with the tested first-order methods.


\begin{figure}[H]
    \centering
    {\includegraphics[width=0.113
    \textwidth]{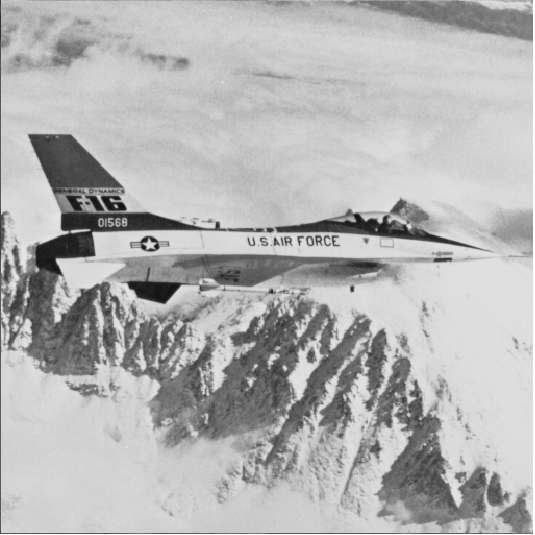} }
    {\includegraphics[width=0.113\textwidth]{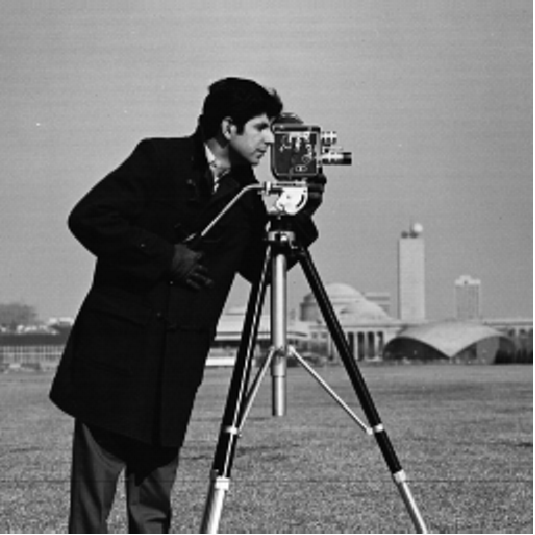} }
    {\includegraphics[width=0.113\textwidth]{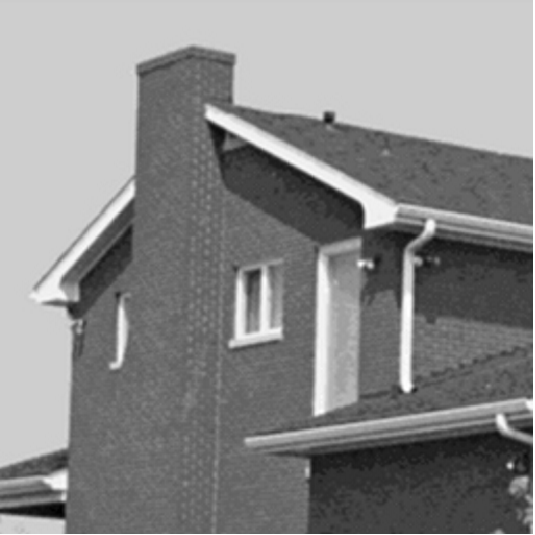} }
    {\includegraphics[width=0.113\textwidth]{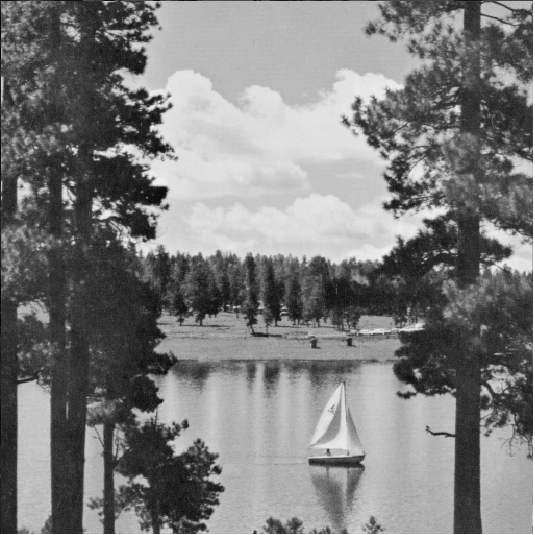}}
    {\includegraphics[width=0.113\textwidth]{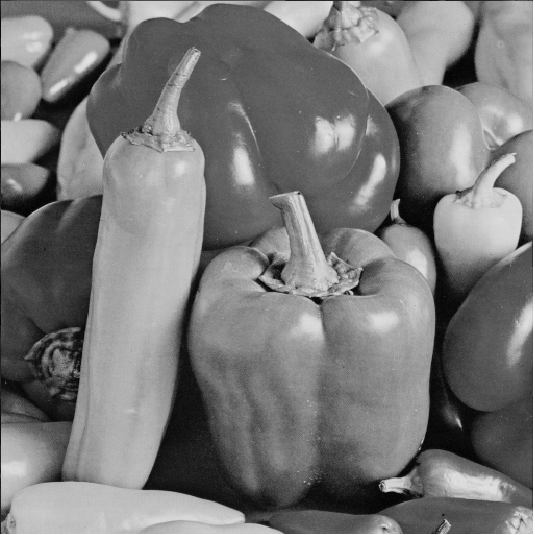} }
    {\includegraphics[width=0.113\textwidth]{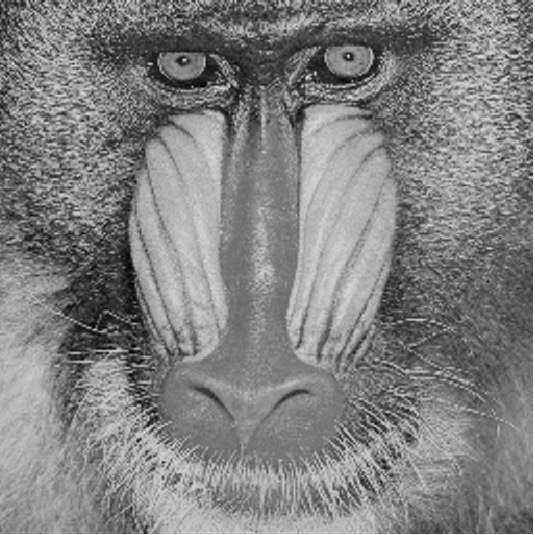} }
    {\includegraphics[width=0.113\textwidth]{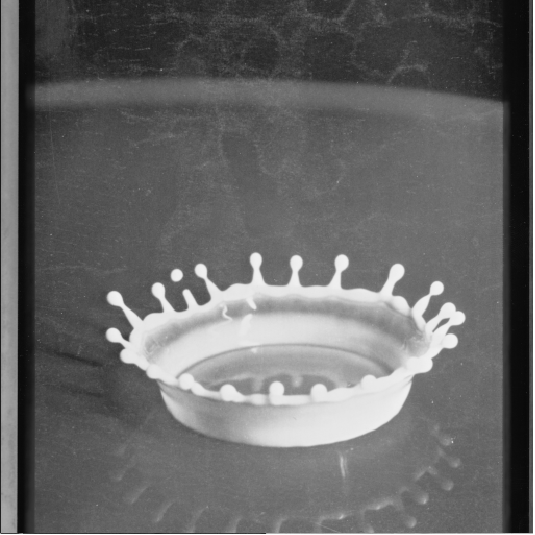} }
    {\includegraphics[width=0.113\textwidth]{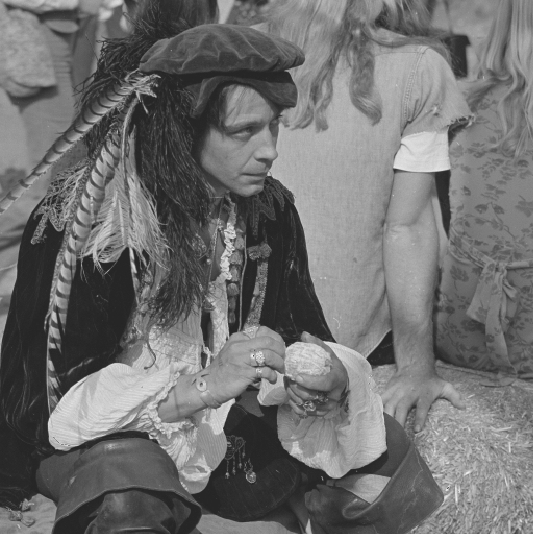}}
    \caption{The eight tested images for LRMC.}
    \label{fig:three_subfigures2}
\end{figure}

\begin{table}[htbp]
\centering
\begin{tabular}{|c|c|c|c|c|c|c|c|c|}
\hline
\multirow{2}{*}{Problem} & \multicolumn{2}{|c|}{SSNCVX} & \multicolumn{2}{|c|}{ADMM} & \multicolumn{2}{|c|}{PG} & \multicolumn{2}{|c|}{APG} \\
\cline{2-9}
 & $\eta$ & Time & $\eta$ & Time & $\eta$ & Time & $\eta$ & Time \\
\hline
Image1 & 1.5e-9 & \textbf{20.1} & 9.9e-9 & 84.5 & 9.6e-9 & 122.4 & 9.9e-9 & 55.8 \\
\hline
Image2 & 4.3e-9 & \textbf{22.1} & 1.0e-8 & 84.0 & 9.8e-9 & 120.9 & 9.6e-9 & 54.5 \\
\hline
Image3 & 5.3e-9 & \textbf{23.2} & 9.9e-9 & 82.8 & 9.6e-9 & 119.5 & 9.3e-9 & 53.9 \\
\hline
Image4 & 3.3e-9 & \textbf{25.3} & 9.7e-9 & 84.1 & 9.8e-9 & 121.1 & 9.8e-9 & 54.6 \\
\hline
Image5 & 7.4e-9 & \textbf{20.3} & 9.5e-9 & 83.7 & 9.7e-9 & 120.4 & 9.9e-9 & 54.4 \\
\hline
Image6 & 1.9e-9 & \textbf{20.9} & 1.0e-8 & 83.5 & 9.8e-9 & 120.4 & 9.7e-9 & 54.3 \\
\hline
Image7 & 1.6e-9 & \textbf{20.2} & 9.9e-9 & 82.2 & 9.9e-9 & 118.3 & 9.7e-9 & 53.1 \\
\hline
Image8 & 2.3e-9 & \textbf{20.8} & 9.8e-9 & 83.0 & 9.7e-9 & 120.0 & 9.7e-9 & 53.9 \\
\hline
\end{tabular}

\caption{ Comparison of tested algorithms on the LRMC problem.}
\label{tab:LRMC}
\end{table}

\section{Conclusion} \label{5}
In this paper, we propose SSNCVX, a semismooth Newton-based algorithmic framework for solving convex composite optimization problems. By reformulating the problem through augmented Lagrangian duality and characterizing the optimality condition via a semismooth equation system, our method provides a unified approach to handle multi-block problems with nonsmooth terms. The framework eliminates the need for problem-specific transformations while enabling flexible model modifications through simple interface updates. Featuring a single-loop structure with second-order semismooth Newton steps, SSNCVX demonstrates superior efficiency and robustness in extensive numerical experiments, outperforming state-of-the-art solvers across various applications. Numerical experiments on various problems establish SSNCVX as an effective and versatile tool for large-scale convex optimization.

\bibliographystyle{siamplain}
\bibliography{ref}

\end{document}